\def\an{\mathrm{an}}
\def\comp{\mathrm{comp}}
\def\cts{\mathrm{cts}}
\def\id{\mathrm{id}}
\def\un{\mathrm{un}}
\def\zag{{\mathrm{Zag}}}
\def\alg{{\mathrm{alg}}}
\def\naive{\mathrm{naive}}
\def\KZB{{\mathrm{KZB}}}
\def\reg{{\mathrm{reg}}}
\def\SL{{\mathrm{SL}}}
\def\Lie{{\mathrm{Lie}}}
\def\Der{{\mathrm{Der}}}
\def\DR{{\mathrm{dR}}}
\def\B{\mathrm B}
\def\bone{\mathds{1}}
\def\dbs{\backslash\hspace{-.04in}\backslash}
\def\ddq{{\partial /\partial q}}
\def\h{\mathfrak h}
\def\ll{\langle\langle}
\def\rr{\rangle\rangle}
\def\p{{\mathfrak p}}
\def\a{\mathbf{a}}
\def\bb{\mathbf{b}}
\def\adual{\check{\mathbf a}}
\def\bdual{\check{\mathbf b}}
\def\e{\mathbf{e}}
\def\u{\mathfrak{u}}
\def\vv{{\vec{\mathsf v}}}
\def\w{\mathbf{w}}
\def\AA{\mathsf A}
\def\C{\mathbb C}
\def\Ca{\mathcal C}
\def\D{\mathbb D}
\def\Eis{G}
\def\G{\mathbb G}
\def\H{\mathbb H}
\def\cH{\mathcal H}
\def\Hbar{\overline{\mathcal H}}
\def\T{\mathsf{T}}
\def\Ss{\mathsf{S}}
\def\E{\mathcal E}
\def\Ebar{\overline{\mathcal E}}
\def\Li{{\mathrm{Li}}}
\def\LL{{\mathbb L}}
\def\K{\mathbb K}
\def\M{\mathcal{M}}
\def\Mbar{\overline{\mathcal M}}
\def\OO{\mathcal O}
\def\P{\mathbb P}
\def\cP{\mathcal P}
\def\Pbar{\overline{\mathcal P}}
\def\Q{\mathbb Q}
\def\U{\mathcal U}
\def\V{\mathbb V}
\def\cV{\mathcal V}
\def\Vbar{\overline{\cV}}
\def\XX{\mathscr X}
\def\Z{\mathbb Z}
\newtheorem*{theorem*}{Theorem}
\newtheorem*{corollary*}{Corollary}
\newtheorem{theorem}{Theorem}[section]
\newtheorem{proposition}[theorem]{Proposition}
\newtheorem{corollary}[theorem]{Corollary}
\newtheorem{lemma}[theorem]{Lemma}
\theoremstyle{definition}
\newtheorem{example}[theorem]{Example}
\newtheorem{formula}[theorem]{Formula}
\theoremstyle{remark}
\newtheorem{remark}[theorem]{Remark}
\newcommand\ad{\operatorname{ad}}
\newcommand\Aut{\operatorname{Aut}}
\newcommand\End{\operatorname{End}}
\newcommand\Hom{\operatorname{Hom}}
\newcommand\Ext{\operatorname{Ext}}
\newcommand\im{\operatorname{im}}
\newcommand\Spec{\operatorname{Spec}}
\font \rus= wncyr10
\newcommand{\sha}{\, \hbox{\rus x} \,}
\newcommand*{\DashedArrow}[1][]{\mathbin{\tikz [baseline=-0.25ex,-latex, dashed,#1] \draw [#1] (0pt,0.5ex) -- (1.3em,0.5ex);}}%
\title[Algebraic De Rham Theory for Unipotent Fundamental Groups of Elliptic Curves]{The Elliptic KZB Connection and Algebraic De Rham Theory for Unipotent Fundamental Groups of Elliptic Curves}
\author{Ma Luo}
\date{\today}							
\thanks{This paper is mostly written at Duke University, and then revised at University of Oxford.}
\begin{document}
\maketitle

\tableofcontents


\section*{Introduction}
A once punctured elliptic curve is an elliptic curve with its identity removed. In this paper, we describe an algebraic de Rham theory for unipotent fundamental groups of once punctured elliptic curves by explicit computations. 

The analytic version of this story is described by Calaque, Enriquez and Etingof \cite{cee} and by Levin and Racinet \cite{levin-racinet}. For a family $\XX\to T$ where each fiber $X_t$ over $t\in T$ is a once punctured elliptic curve,
\begin{center}
\begin{tikzcd}[column sep=my size]
X_t \arrow[d, no head, no tail] & \subset & \XX \arrow[d, no head, no tail]\\
t & \in & T
\end{tikzcd}\\
\end{center}
there is a vector bundle $\cP$ of prounipotent groups over $\XX$, endowed with a flat connection. For a point $x\in\XX$ that lies over $t$, i.e. $x\in X_t$, the fiber of $\cP$ over $x$ is the unipotent fundamental group $\pi_1^\un(X_t,x)$. We are particularly interested in the case when $\XX=\E'$ and $T=\M_{1,1}$. In this case, the flat connection is called the universal elliptic KZB\footnote{Named after physicists Knizhnik, Zamoldchikov and Bernard.} connection\footnote{The general universal elliptic KZB connection is the flat connection on the bundle $\cP$ over $\M_{1,n+1}$ whose fiber over $[E;0,x_1,\dots,x_n]$ is the unipotent fundamental group of the configuration space of $n$ points on $E'$ with base point $(x_1,\dots,x_n)$. Calaque et al \cite{cee} write down the universal elliptic KZB connection for all $n\ge 1$.}. The bundle $\cP$ extends naturally by Deligne's canonical extension $\Pbar$ to $\Ebar$, and the universal elliptic KZB connection on $\Pbar$ has regular singularities around boundary divisors: the identity section of the universal elliptic curve $\E\to\M_{1,1}$, and the nodal cubic. One can restrict the bundle $\cP$ to a single fiber of $\E'\to\M_{1,1}$, i.e. a once punctured elliptic curve $E':=E-\{\id\}$, and obtain Deligne's canonical extension $\Pbar$ of it to $E$. It is endowed with a unipotent connection on $E$, having regular singularity at the identity. We call this the elliptic KZB connection on $E$.

Working algebraically, Levin and Racinet have shown that there is a $\K$-structure on the bundle $\cP$ over a punctured elliptic curve $X$ defined over a field $\K$ of characteristic zero, and a $\Q$-structure for the bundle $\cP$ over $\M_{1,2/\Q}$. However, their algebraic formulas for the (universal) elliptic KZB connection is neither explicit nor having regular singularity at the identity (section). By resolving these issues for the case of $\M_{1,2}$, we will prove that
\begin{theorem*}
There is an explicit $\Q$-de Rham structure $\Pbar_\DR$ on the bundle $\Pbar$ over $\Mbar_{1,2}$ with the universal elliptic KZB connection, which has regular singularities along boundary divisors, the identity section and the nodal cubic.
\end{theorem*}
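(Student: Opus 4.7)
The plan is to translate the analytic universal elliptic KZB connection of Calaque--Enriquez--Etingof into an explicit algebraic formula over $\Q$, and then to perform a gauge transformation that regularizes the singularity along the identity section while preserving regularity at the nodal cubic.

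First, I fix the $\Q$-structure on $\Pbar$. Over a fiber $E'$, Chen's algebraic theory identifies $\Lie\pi_1^{\un,\DR}(E')$ with the completed free Lie algebra on $H^1_\DR(E')$, which has a canonical $\Q$-basis dual to the classes of $\omega = dx/y$ and $\eta = x\,dx/y$ on the Weierstrass model. This gives a trivialization of $\Pbar$ once pulled back along an appropriate tangential base point, and the bundle inherits a $\Q$-structure over $\Mbar_{1,2}$ from the relative Gauss--Manin picture together with the algebraic $\K$-structure of Levin--Racinet specialized to $\K=\Q$. I then write the flat connection as $\nabla = d + \Omega$, with $\Omega$ a 1-form valued in derivations of this free Lie algebra.

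Next, I construct $\Omega$ explicitly by expanding the CEE Kronecker-function formula. The pairing $F(z,\ad a,\tau)(\ad b)$ expands as a power series in $\ad a$ whose coefficients are analytic functions like $\wp^{(k)}(z,\tau)$ multiplied by Eisenstein series. Each piece has an algebraic avatar: the meromorphic terms are replaced by algebraic differentials built from $x$, $y$, $dx/y$ on the Weierstrass model with prescribed pole order along the identity, and the Eisenstein series coefficients are identified with sections of $\omega_{\E/\M_{1,1}}^{\otimes 2k}$ over $\Mbar_{1,1}$ defined over $\Q$. This yields an explicit $\Q$-algebraic 1-form.

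The main obstacle is the regularization along the identity section. The naive algebraic formula inherits from $F(z,\ad a,\tau)$ a first-order pole plus regular Taylor terms in $z$ of all orders; after passing to the algebraic uniformizer $x\sim 1/z^2$, these contribute poles of unbounded order. I remove them by an explicit gauge transformation $g\in\Aut\Pbar_\DR$, constructed order-by-order in the lower central series filtration: at each weight, the obstruction to having a logarithmic pole lives in a finite-dimensional $\Q$-vector space of modular forms, where a $\Q$-rational solution exists and is unique up to a controlled ambiguity fixed by normalization at a chosen tangential base point. The resulting connection has only logarithmic poles along the identity section, and regularity along the nodal cubic is verified separately by inspecting the $q$-expansion on the Tate curve. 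Flatness is inherited from the CEE connection because $g$ is algebraic, and the $\Q$-structure is preserved because $g$ has $\Q$-rational coefficients at each weight.
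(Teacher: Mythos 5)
Your overall strategy — translate the analytic KZB connection into an algebraic 1-form over $\Q$ via an initial gauge transformation, then regularize the singularity along the identity section by a further gauge transformation — is the same as the paper's. But the crucial step, constructing the regularizing gauge, is exactly where your argument has a gap, and the way you describe the obstruction theory is also not right.

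You propose building the regularizing gauge transformation order-by-order in the lower central series, claiming that ``at each weight, the obstruction to having a logarithmic pole lives in a finite-dimensional $\Q$-vector space of modular forms, where a $\Q$-rational solution exists.'' Two problems. First, the obstruction at each order is not a modular form: after the first gauge $g_{\alg}=\exp(-\frac{1}{2\pi i}E_1\T)$, the coefficients of $\nu_1^{\alg}$ are the elliptic functions $q_n(x,y,u,v)$ built from the $P_k$'s, with poles of order growing with $n$ at the identity. The thing you need to kill lives in spaces of sections of $\Omega^1$ with high-order poles along the identity section, whose coefficients involve $x$ and $y$ (or their universal analogues), not merely $u,v$. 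Second, and more seriously, you assert but do not prove that a solution exists at each weight. That existence claim is precisely the theorem. The paper short-circuits the entire iterative scheme by exhibiting a single closed-form gauge $g_{\reg}=\exp(-\tfrac{2x^2}{y}\T)$ and checking directly, via the principal-part expansion of each $P_k$, that $1+\sum_n r_n\T^n = \exp(-\sum_k \tfrac{(-\T)^k}{k}P_k)$ reduces to $(1+\T/2\pi i\xi)\cdot(\text{holomorphic})$, which is a first-order pole. This works because the simple pole of $E_1$ introduced by $g_{\alg}$ is exactly matched by $P_1=-2x^2/y$. Your iterative framework, even if repairable, would require you to identify this cancellation mechanism at lowest order and then rule out higher-order obstructions; as written it is a restatement of the problem, not a solution.

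There is also a structural point your write-up glosses over: you speak of ``a trivialization of $\Pbar$'' and of $g\in\Aut\Pbar_\DR$ as if it were a global change of frame, but $\Pbar_\DR$ over $\Ebar$ is \emph{not} a trivial bundle (the paper cites a Hodge-theoretic obstruction). The two connection formulas $\omega_{\alg}$ and $\omega_{\reg}$ live on two different trivializations over the opens $\E'$ and $\E''$, and $g_{\reg}$ is the gluing cocycle on $\E'\cap\E''$, not a global automorphism. You should make explicit that the result of your construction is a bundle presented by a \v{C}ech datum on this two-element cover, and that the singularity of the cocycle along the identity section is what allows the transformed connection form to be logarithmic even though the untransformed one is not.
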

Restricting to a single elliptic curve, we get
\begin{corollary*}
If $E$ is an elliptic curve defined over a field $\K$ of characteristic zero, then there is an explicit $\K$-de Rham structure $\Pbar_\DR$ on the bundle $\Pbar$ over $E$ with an elliptic KZB connection, which has regular singularity at the identity.
\end{corollary*}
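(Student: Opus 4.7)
The plan is to deduce the corollary from the main theorem by base change. An elliptic curve $E/\K$ corresponds to a $\K$-point $[E]\colon \Spec\K \to \M_{1,1}$; pulling back the universal elliptic curve $\E \to \M_{1,1}$ along this morphism recovers $E$ (with $\Ebar$ the completion, which for a non-cuspidal point is just $E$ itself), and pulling back the $\Q$-de Rham bundle $\Pbar_\DR$ from $\Mbar_{1,2}$ supplied by the main theorem yields a $\K$-de Rham structure on the bundle $\Pbar$ over $E$. Because the formulas furnished by the main theorem are explicit, their restriction to a single fiber is again explicit.

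Next I would verify that the restricted connection coincides with the elliptic KZB connection on $E$ described in the introduction. The universal KZB connection on $\Pbar\to\Mbar_{1,2}$ decomposes, after choosing local coordinates, into a vertical part tangent to the fibers of $\Mbar_{1,2}\to\Mbar_{1,1}$ and a horizontal part differentiating in the moduli direction; upon restriction to a single fiber over $[E]$, the horizontal part drops out, and the vertical part is by construction the single-curve elliptic KZB connection on $\Pbar\to E$.

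For the regular singularity statement, the identity section of $E\to\Spec\K$ is the scheme-theoretic pullback of the identity-section divisor of $\E\to\M_{1,1}$ sitting inside $\Mbar_{1,2}$. Since regular singularities along a smooth divisor are preserved under pullback along a morphism transverse to that divisor, the regular singularity of the universal connection along the identity section descends to a regular singularity of the elliptic KZB connection at $\id\in E(\K)$.

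The genuine content is all packaged inside the main theorem; the main obstacle is therefore not the corollary but the theorem itself, namely producing explicit $\Q$-rational formulas on $\Mbar_{1,2}$ for the universal KZB connection that extend across the identity section and the nodal cubic with regular singularities. Once that has been achieved, the corollary is a purely formal consequence of the functoriality of the pullback of flat bundles with $\Q$-de Rham structure.
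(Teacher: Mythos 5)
Your proposal is correct and matches the one-line derivation the introduction suggests (``Restricting to a single elliptic curve, we get\dots''), but it is worth noting that the body of the paper does not actually prove the Corollary by restricting the Theorem. Part 2 of the paper (Sections~\ref{alg}--10) gives a \emph{direct} proof of the single-curve statement: it fixes $E_\tau$, applies the gauge transformations $g_\alg=\exp(-\tfrac{1}{2\pi i}E_1\T)$ and $g_\reg=\exp(-\tfrac{2x^2}{y}\T)$ to the restricted KZB form $\nu_1$, and verifies by hand that the resulting algebraic forms $\nu_1^\alg$ and $\nu_1^\reg$ are defined over $\K$ and glue to a bundle $\Pbar_\DR$ with regular singularity and residue $\ad_{[\T,\Ss]}$ at the identity. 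Only afterwards, in Part 3, does the paper run the analogous computation over $\Mbar_{1,2}$ to obtain the universal $\Q$-structure, so the logical dependence in the text is actually Corollary first, Theorem second. Your pullback route is the cleaner deduction once the universal Theorem is in hand (and a quick inspection of $\omega_\alg$ confirms your claim that the moduli-direction terms, which all carry a factor of $\alpha=2udv-3vdu$ or $d\Delta$, die on a fiber where $u,v$ are constants, leaving exactly $\nu_1^\alg$), while the paper's route has the virtue of being self-contained and lighter for a reader who only cares about a fixed $E$. Both are valid; just be aware that when you appeal to ``the formulas furnished by the main theorem,'' the paper's own verification that those formulas restrict as expected is precisely the content of its Part 2, not a consequence drawn from Part 3.
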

It is well known that these de Rham structures exist, and can be constructed via Tannaka duality. But the explicit nature allows us to compute the image of these de Rham structures on the complexification of the Betti ones.

\bigskip

In the classical case of $\P^1-\{0,1,\infty\}$, there is a trivial vector bundle $\cP$ on it with fiber $\C\ll\e_0,\e_1\rr$, formal power series in non-commuting generators $\e_0$, $\e_1$, and with the KZ connection
$$\nabla=d-\frac{dz}{z}\e_0+\frac{dz}{1-z}\e_1,$$
where $\e_0$, $\e_1$ act on the fiber by left multiplication. This bundle extends via Deligne's canonical extension to a trivial bundle $\Pbar$ over $\P^1$ with the same connection having regular singularities at $0,1,\infty$. One can view this bundle as a universal object of the tannakian category
$$\Ca_\Q:=\left\{
\begin{tabular}{c}
unipotent vector bundles $\Vbar$ over $\P^1$ defined over $\Q$\\
with a flat connection $\nabla$ that has regular \\
singularities at $0,1,\infty$ with nilpotent residue
\end{tabular}\right\},$$
and interpret the KZ connection as a universal unipotent connection on $\P^1_{/\Q}$. Moreover, classical polylogarithms 
$$\Li_k(z):=\sum_{n>0}\frac{z^n}{n^k},\qquad k\ge 1$$
and their generalizations
$$\Li_{k_1,\cdots,k_r}(z):=\sum_{n_1>\cdots>n_r>0}\frac{z^{n_1}}{n_1^{k_1}\cdots n_r^{k_r}},\qquad r>0, k_i>0,$$
interpreted \cite{beilinson-deligne} as periods of unipotent variations of mixed Hodge structures on $\P^1-\{0,1,\infty\}$, can be expressed as (regularized) iterated integrals of algebraic 1-forms $\frac{dz}{z}$ and  $\frac{dz}{1-z}$ that appear in the KZ connection.

It is important to note from \cite[\S 12]{deligne:p1} that in the ``good" case when a smooth variety $X$ is defined over a field $\K$, and its compactification $\overline X$ satisfies the conditions of 
$$H^0(\overline{X},\OO)=\K \quad\text{and}\quad H^1(\overline{X},\OO)=0,$$
Deligne's canonical extension to $\overline X$ of a unipotent vector bundle $\cV$ over $X$, $\Vbar$, is a trivial bundle. Clearly this works for $X=\mathbb P^1-\{0,1,\infty\}$, and allows one to define a {\em global fiber functor} on the tannakian category $\Ca_\Q$ of unipotent connections over $\P^1_{/\Q}$ with regular singularities at $0,1,\infty$. However, this does not work for punctured elliptic curves.

The tricky part in our elliptic case is that Deligne's canonical extension of $\cP_\DR$, $\Pbar_\DR$, is not a trivial bundle as the corresponding monodromy representation fails a Hodge theoretic restriction (see \cite{hilbert}). We trivialize the bundle $\Pbar_\DR$ on two open subsets of the (universal) elliptic curve, and write down algebraic connection formulas according to these trivializations, with suitable gauge transformation on their intersection. These two opens are $E'$ and $E''$ (correspondingly, $\E'$ and $\E''$ for the universal elliptic curve $\E$), where $E''$, containing the identity, is the complement of three non-trivial 2-torsion points of $E$ (the trivial one being the identity).

For a single elliptic curve $E_{/\K}$, this bundle $\Pbar_\DR$ is a universal object of the tannakian category
$$\Ca^\DR_\K:=\left\{
\begin{tabular}{c}
unipotent vector bundles $\Vbar$ over $E$ defined over $\K$\\
with a flat connection $\nabla$ that has regular singularity\\
at the identity with nilpotent residue
\end{tabular}\right\}.$$
This allows us to compute the tannakian fundamental group $\pi_1(\Ca^\DR_\K,\omega)$ of this category, where the fiber functor $\omega$ is the fiber over a $\K$-rational point of $E$. It is a free pronilpotent group of rank 2 defined over $\K$. This tannakian formalism implies that the elliptic KZB connection that we have computed explicitly on the algebraic vector bundle $\Pbar_\DR$ can be viewed as a universal unipotent connection over $E_{/\K}$. Similar results have been recently obtained by Enriquez--Etingof \cite{ee} for the configuration space of $n$ points in an elliptic curve $E$ with ground field $\C$. 

The elliptic KZB connection enables us to construct closed\footnote{i.e. homotopy invariant.} iterated integrals of algebraic 1-forms of any lengths on $E$, so that one can compute the (regularized) periods of $\pi_1^\un(E',\vv)$, where $\vv$ is a tangential base point at the identity. This leads naturally to elliptic polylogarithms \cite{beilinson-levin,brown-levin,levin-racinet}. Note that it is not known in general how to construct closed iterated integrals of algebraic 1-forms of lengths 3 or more. In Section \ref{naive}, we show that the naive elliptic analogue of the KZ connection
$$\nabla=d-\frac{xdx}{y}\T+\frac{dx}{y}\Ss,$$
 which we call the naive connection, is somewhat different than the elliptic KZB connection, see Proposition \ref{Kstr} for the precise statement. However, one can show that the elliptic KZB connection on $E$ is algebraically gauge equivalent to the naive connection up to degree 5, see Remark \ref{age}. Therefore, periods constructed from both connections, as iterated integrals of algebraic 1-forms of lengths at most 5, are the same. 

\bigskip

\noindent{\em Acknowledgments:} I would like to thank my advisor, Richard Hain, for introducing me this problem and providing numerous suggestions and discussions in writing this paper. I would like to thank Francis Brown for discussions, especially on Section \ref{algconn}. I would also like to thank Jin Cao, Benjamin Enriquez, Minhyong Kim, Nils Matthes and Alex Saad for their interest in the paper. Finally, I would like to thank the referees for their thorough reading and useful comments on the manuscript.

\part{Background}

\section{Moduli Spaces of Elliptic Curves}

Here we quickly review the construction of moduli spaces of elliptic curves and their Deligne-Mumford compactifications. Full details can be found in \cite{km}, \cite{elliptic}, or the first section of \cite{kzb}.

\subsection{Moduli spaces as algebraic stacks}\label{stack}
Denote the moduli stack over $\Q$ of elliptic curves with $n$ marked points and $r$ non-zero tangent vectors by $\M_{1, n+\vec{r}}$. 
The Deligne-Mumford compactification of $\M_{1,n}$ will be denoted by $\Mbar_{1,n}$. 

One can view $\M_{1,n+1}$ as the stack quotient of $\M_{1,n+\vec{1}}$ by the $\G_m$-action:
$$
\lambda: [E;x_1,\cdots,x_n;\omega]\mapsto [E;x_1,\cdots,x_n;\lambda\omega],
$$
where a moduli point $[E;x_1,\cdots,x_n;\omega]\in\M_{1,n+\vec{1}}$ is represented by tuples 
$$(E;x_1,\cdots,x_n;\omega),$$
an elliptic curve $E$ with $n$ marked points and the differential form $\omega$ that is dual to the marked tangent vector.

For example, the moduli space $\mathcal{M}_{1,\vec{1}}$ over $\Q$ can be described explicitly as the scheme
$$\mathcal M_{1,\vec{1}}=\mathbb A^2_{\Q}-D,$$
where $D$ is the discriminant locus $\{(u,v)\in\mathbb A^2:\Delta=u^3-27v^2= 0\}$, see \cite[\S 2.2]{km}. The point $(u,v)$ corresponds to the once punctured elliptic curve (the plane cubic) $y^2=4x^3-ux-v$ with the abelian differential $dx/y$. The moduli stack $\mathcal M_{1,1}$ can be defined as the quotient of $\mathcal M_{1,\vec{1}}$ by the $\mathbb G_m$-action:
$$\lambda\cdot(u,v)=(\lambda^{-4}u,\lambda^{-6}v).$$
Its compactification, the moduli stack $\Mbar_{1,1}$, is the quotient of $Y:=\mathbb A^2-\{(0,0)\}$\footnote{One may regard $Y$ as a partial compactification of $\M_{1,\vec{1}}$.} by the same $\G_m$-action above. 

Similarly, define the moduli stack $\mathcal{M}_{1,2}$ over $\Q$ to be the quotient of the scheme
$$\mathcal M_{1,1+\vec{1}}:=\{(x,y,u,v)\in\mathbb{A}^2\times\mathbb{A}^2: y^2=4x^3-ux-v, \text{ and }u^3-27v^2\neq 0 \}$$
by the $\mathbb{G}_m$-action
$$\lambda:(x,y,u,v)\mapsto(\lambda^{-2}x,\lambda^{-3}y,\lambda^{-4}u,\lambda^{-6}v).$$
Here the point $(x,y,u,v)\in\M_{1,1+\vec{1}}$ corresponds to the point $(x,y)$ on the punctured elliptic curve $y^2=4x^3-ux-v$ with the abelian differential $dx/y$. Note that $\M_{1,2}$ is $\E'$, the universal elliptic curve $\E$ over $\M_{1,1}$ with its identity section removed. We define its compactification $\Mbar_{1,2}$ as the quotient of the scheme
$$\{(x,y,u,v)\in\mathbb{A}^2\times\mathbb{A}^2: y^2=4x^3-ux-v, (u,v)\neq (0,0) \}$$
by the same $\G_m$-action above. Note that $\Mbar_{1,2}$ is the compactification $\Ebar$ of the universal elliptic curve $\E$ whose restriction to the $q$-disk is the Tate curve $\E_{\mathrm{Tate}}\to\Spec\Z[[q]]$ (see \cite[Chap. V]{silverman}).

\subsection{Moduli spaces as complex analytic orbifolds}\label{orbi}
Working complex analytically, we can define moduli spaces as complex orbifolds
$$
\M_{1,1}^\an:=\G_m\dbs\M_{1,\vec{1}}^\an, \quad \M_{1,2}^\an:=\G_m\dbs\M_{1,1+\vec{1}}^\an,
$$
where $\M_{1,\vec{1}}^\an:=\M_{1,\vec{1}}(\C)$ and $\M_{1,1+\vec{1}}^\an:=\M_{1,1+\vec{1}}(\C)$ are complex analytic manifolds.

The moduli space $\M_{1,1}^\an$ can also be defined as the orbifold quotient $\SL_2(\Z)\dbs\h$ of the upper half plane $\h$ by the standard $\SL_2(\Z)$ action:
$$\gamma=\begin{pmatrix} a & b \cr c & d\end{pmatrix}:\tau\mapsto \gamma\tau=\frac{a\tau+b}{c\tau+d}$$
where $\gamma\in \SL_2(\Z)$ and $\tau\in\h$. The map
\begin{align*}
\h &\to \M_{1,\vec{1}}^\an=\{(u,v)\in\C^2: u^3-27v^2\neq 0\}
\\ \tau &\mapsto (20\Eis_4(\tau),\frac 73\Eis_6(\tau))
\end{align*}
induces an isomorphism of orbifolds $\SL_2(\Z)\dbs\h\cong\G_m\dbs\M_{1,\vec{1}}^\an$, where $\Eis_{2n}(\tau)$ is the normalized Eisenstein series of weight $2n$ (see Section \ref{eis} for a definition).

A point $\tau\in\h$ corresponds to the framed elliptic curve $E_{\tau}:=\C/\Lambda_{\tau}$ where $\Lambda_{\tau}:=\Z\oplus\Z\tau$, with a basis $\a, \bb$ of $H_1(E_{\tau};\Z)$ that corresponds to $1,\tau$ of the lattice $\Lambda_{\tau}$ via the identification $H_1(E_{\tau};\Z)\cong\Lambda_{\tau}$.

There is a canonical family of elliptic curves $\E_{\h}$ over the upper half plane $\h$, called the universal framed family of elliptic curves in \cite{elliptic}, whose fiber over $\tau\in\h$ is $E_{\tau}$. It is the quotient of the trivial bundle $\C\times\h\rightarrow\h$ by the $\Z^2$-action:
$$(m,n) : (\xi,\tau) \mapsto \bigg(\xi + \begin{pmatrix} m & n \end{pmatrix}
\begin{pmatrix} \tau \cr 1 \end{pmatrix}, \tau \bigg).$$

The universal elliptic curve $\E^\an$ over $\M_{1,1}^\an$ is the orbifold quotient of $\C\times\h$ by the semi-direct product\footnote{The semi-product structure is induced from the right action of $\SL_2(\Z)$ on $\Z^2$:
$\begin{pmatrix} a & b \cr c & d\end{pmatrix} :
\begin{pmatrix} m & n \end{pmatrix} \mapsto
\begin{pmatrix}m & n\end{pmatrix}\begin{pmatrix} a & b \cr c & d\end{pmatrix}.$} $\SL_2(\Z) \ltimes \Z^2$,
where $\Z^2$ acts on $\C\times\h$ as above, and $\gamma\in \SL_2(\Z)$ acts as follows:
$$\gamma : (\xi,\tau) \mapsto\big((c\tau+d)^{-1}\xi,\gamma \tau\big).$$
The universal elliptic curve $\E^\an$ can also be obtained as the orbifold quotient of $\E_{\h}\rightarrow\h$ by $\SL_2(\Z)$. It is an orbifold family of elliptic curves whose fiber over a moduli point $[E]\in\M_{1,1}$ is an elliptic curve isomorphic to $E$. If we remove all the lattice points $\Lambda_\h:=\{(\xi,\tau)\in\C\times\h:\xi\in\Lambda_\tau\}$ from $\C\times\h$, then take the orbifold quotient of the same $\SL_2(\Z) \ltimes \Z^2$-action above, we obtain another analytic description of the moduli space $\M_{1,2}^\an$. To relate the two descriptions, there is a map
\begin{align*}
(\C\times\h)-\Lambda_\h &\to \M_{1,1+\vec{1}}^\an=\{(x,y,u,v)\in\C^2\times\C^2: y^2=4x^3-ux-v, (u,v)\neq (0,0) \}
\\ (\xi,\tau) &\mapsto (P_2(\xi,\tau), -2P_3(\xi,\tau),20\Eis_4(\tau),\frac 73\Eis_6(\tau))
\end{align*}
that induces an isomorphism $\SL_2(\Z) \ltimes \Z^2\dbs((\C\times\h)-\Lambda_\h)\cong\G_m\dbs\M_{1,1+\vec{1}}^\an$, where $P_2(\xi,\tau)$ and $P_3(\xi,\tau)$ are, up to a constant, the Weierstrass $\wp$-function $\wp_\tau(\xi)$ and its derivative $\wp'_\tau(\xi)$ (see Section \ref{eisell} for the definition).


\section{The Local System $\H$ with its Betti and $\Q$-de Rham Realizations}

The local system $\H$ over $\M_{1,1}$ is a ``motivic local system", which has a set of compatible realizations: Betti, $\Q$-de Rham, Hodge and $l$-adic described in \cite[\S 5]{umem}. In this section we will follow \cite[\S 5]{umem} closely and describe its Betti realization $\H^\B$ and $\Q$-de Rham realization $\cH^\DR$, and the comparison between these two. 

We will denote the pull back of $\H$ (resp. $\H^\B$, $\cH^\DR$) to $\M_{1,n+\vec{r}}$ by $\H_{n+\vec{r}}$ (resp. $\H^\B_{n+\vec{r}}$, $\cH^\DR_{n+\vec{r}}$), so that $\H_1$ (resp. $\H^\B_1$, $\cH^\DR_1$) is the same as $\H$ (resp. $\H^\B$, $\cH^\DR$).

\subsection{Betti realization $\H^\B$}\label{locsys}
The Betti realization $\H^\B$ is the local system $R^1\pi^\an_\ast\Q$ over $\M_{1,1}^\an$ associated to the universal elliptic curve $\pi^\an: \E^\an \to \M_{1,1}^\an$. We identify it, via Poincar\'e duality $H^1(E) \to H_1(E)$ fiberwise, with the local system over $\M_{1,1}^\an$ whose fiber over $[E] \in \M_{1,1}$ is $H_1(E;\Q)$.

There is a natural $\SL_2(\Z)$ action
$$
\gamma=\begin{pmatrix} a & b \cr c & d\end{pmatrix}: \begin{pmatrix} \bb \cr \a\end{pmatrix}\mapsto\begin{pmatrix} a & b \cr c & d\end{pmatrix}\begin{pmatrix} \bb \cr \a\end{pmatrix},
$$
where $\a,\bb$ is the basis of $H_1(E_\tau;\Z)$ that corresponds to the basis $1,\tau$ of $\Lambda_\tau$. The sections $\a,\bb$  trivialize the pullback $\H_\h$ of $\H^\B$ to $\h$.

Denote the dual basis of $H^1(E_\tau;\Q)\cong \Hom(H_1(E_\tau),\Q)$ by $\adual,\bdual$. Then, under Poincar\'e duality,
$$
\adual = -\bb \text{ and } \bdual = \a.
$$
And the corresponding $\SL_2(\Z)$ action on this dual basis is
$$
\gamma:\begin{pmatrix} \a & -\bb\end{pmatrix}\mapsto\begin{pmatrix} \a & -\bb\end{pmatrix}\begin{pmatrix} a & b \cr c & d\end{pmatrix}.
$$
One can construct the local system $\H^\B$ by taking the orbifold quotient of the local system $\H_\h\to\h$ by the above $\SL_2(\Z)$-action.

\subsection{$\Q$-de Rham realization $\cH^\DR$}
The $\Q$-de Rham realization $\cH^\DR$ is an algebraic vector bundle $H^1(\E/\M_{1,1})$ over $\Q$, defined by relative cohomology, on $\M_{1,1/\Q}$ equipped with the Gauss-Manin connection. By our definition $\M_{1,1}:=\G_m\dbs\M_{1,\vec{1}}$, to work with $\M_{1,1}$ is to work $\G_m$-equivariantly with $\M_{1,\vec{1}}$. We first construct the corresponding algebraic vector bundle on $\M_{1,\vec{1}}$. Define a vector bundle
$$
\cH_{\vec{1}}^\DR:=\OO_{\M_{1,\vec{1}}}\Ss\oplus\OO_{\M_{1,\vec{1}}}\T
$$
over $\M_{1,\vec{1}}$ with a $\G_m$-action:
$$
\lambda\cdot\Ss=\lambda^{-1}\Ss\quad\text{and}\quad\lambda\cdot\T=\lambda\T,
$$
where the sections $\Ss$ and $\T$ represent algebraic differential forms $xdx/y$ and $dx/y$ respectively. This $\G_m$-action extends the action on $\M_{1,\vec{1}}$ to the bundle $\cH_{\vec{1}}^\DR$ over it.

The Gauss-Manin connection is explicitly given by
\begin{equation}
\nabla_0=d+
\left(-\frac{1}{12}\frac{d\Delta}{\Delta}\T+\frac{3\alpha}{2\Delta}\Ss\right)\frac{\partial}{\partial \T}+\left(-\frac{u\alpha}{8\Delta}\T+\frac{1}{12}\frac{d\Delta}{\Delta}\Ss\right)\frac{\partial}{\partial \Ss},
\end{equation}
where $\alpha=2udv-3vdu$, $\Delta=u^3-27v^2$ and $\frac{\partial}{\partial\Ss}$,$\frac{\partial}{\partial\T}$ a dual basis for $\Ss$,$\T$ (cf. \cite{kzb} Prop 19.6). This connection is $\G_m$-invariant, and defined over $\Q$. Therefore, the bundle $\cH^\DR_{\vec{1}}$ with connection $\nabla_0$ over $\M_{1,\vec{1}}$ descends to a bundle $\cH^\DR$ over $\M_{1,1}$.

The canonical extension $\Hbar^\DR_{\vec{1}}$ of $\cH^\DR_{\vec{1}}$ to $Y:=\mathbb A_\Q^2-\{(0,0)\}$ is a vector bundle 
$$
\Hbar^\DR_{\vec{1}}:=\OO_Y\Ss\oplus\OO_Y\T
$$ 
with the same connection $\nabla_0$ above. Since the connection has regular singularities along the discriminant locus $D=\{\Delta=0\}$, and recall that $\Mbar_{1,1}=\G_m\dbs Y$ from Section \ref{stack}, the bundle $\Hbar^\DR_{\vec{1}}\to Y$ descends to a bundle $\Hbar^\DR$ over $\Mbar_{1,1}$ with regular singularity and nilpotent residue at the cusp. It is an extension of $\cH^\DR$ over $\M_{1,1}$ to $\Mbar_{1,1}$.

\subsection{The flat vector bundle $\cH^\an$ and the comparison between $\cH^\an$ and $\cH^\DR$}
Define the holomorphic vector bundle
$$
\cH^\an:=\H^\B\otimes\OO_{\M_{1,1}^\an}
$$
over $\M_{1,1}^\an$. The pullback of $\cH^\an$ to $\h$ (using the quotient map $\h\to\M_{1,1}^\an=\SL_2(\Z)\dbs\h$) is the vector bundle
$$
\cH_\h^\an=\OO_\h\a\oplus\OO_\h\bb,
$$
where the sections $\a$ and $\bb$ are flat.

Define a holomorphic section $\w$ of $\cH_\h^\an$ by 
$$
\w(\tau) = w_\tau = 2\pi i(\adual + \tau \bdual) = 2\pi i(\tau \a - \bb),
$$
where $w_\tau$ is the class in $H^1(E_\tau;\C)$ represented by the holomorphic differential $2\pi i\,d\xi$.

The sections $\a$ and $\w$ trivialize the pull back 
$$\cH^\an_{\D^*}:=\OO_{\D^*}\a\oplus\OO_{\D^*}\w$$ of $\cH^\an$ to $\D^*$ via the map 
$$\h\to\D^*,\qquad\tau\mapsto q:=e^{2\pi i \tau},$$
as they are invariant under $\gamma:\tau\mapsto \tau+1$ (with $\gamma$ being $\pm\begin{pmatrix} 1 & 1 \cr 0 & 1 \end{pmatrix}$), and thus invariant under the monodromy action on the punctured $q$-disk $\D^*$. 

An easy computation \cite[\S 5.2]{umem} shows that the connection on $\cH_{\D^*}^\an$ in terms of this framing is 
$$
\nabla^\an_0=d+\a\frac{\partial}{\partial\w}\frac{dq}{q}.
$$
Since this connection has a regular singularity at the cusp $q=0$, we can extend $\cH_{\D^*}^\an$ to the $q$-disk $\D$ by defining
$$
\cH_{\D}^\an:=\OO_\D\a\oplus\OO_\D\w.
$$
Therefore, we obtain Deligne's canonical extension $\Hbar^\an$ of $\cH^\an$ to $\Mbar_{1,1}^\an$, endowed with a connection $\nabla_0^\an$ that has a regular singularity at the cusp.

To relate Betti and de Rham sections of $\cH^\an$, we pull back the bundle $\cH_{\vec{1}}^\an$ to $\h$ via the map $\h\to\M_{1,\vec{1}}^\an$ in Section \ref{orbi}, and compare it with $\cH_\h^\an$. 
\begin{proposition}\label{HBDR}
There is a natural isomorphism
$$
(\Hbar^\an,\nabla^\an_0)\cong (\Hbar^\DR,\nabla_0)\otimes_{\OO_{\Mbar_{1,1/\Q}}}\OO_{\Mbar_{1,1}^\an}
$$
induced from the pull back. The sections $\T$ and $\Ss$ that correspond to $dx/y$ and $xdx/y$ respectively, after being pulled back, become $\T=\w/2\pi i$, and $\Ss=(\a-2G_2(\tau)\w)/2\pi i$. 
\end{proposition}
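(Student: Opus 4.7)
The plan is to pull back both sides along the period map $\h \to \M_{1,\vec{1}}^\an$, $\tau \mapsto (20G_4(\tau), \tfrac{7}{3}G_6(\tau))$, verify the claimed identifications of $\T$ and $\Ss$ as classes in $H^1(E_\tau;\C)$ fiberwise over $\h$, check the compatibility of the two connections, and then descend by $\SL_2(\Z)$- and $\G_m$-equivariance to $\M_{1,1}^\an$, finally extending across the cusp by uniqueness of Deligne canonical extensions.

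Fix $\tau\in\h$. Under the Weierstrass uniformization $\C/\Lambda_\tau\to E_\tau$, $\xi\mapsto(P_2(\xi,\tau),-2P_3(\xi,\tau))$, with $P_2, P_3$ proportional to $\wp_\tau, \wp_\tau'$ by fixed universal constants, the algebraic forms $\T=dx/y$ and $\Ss=x\,dx/y$ pull back to constant multiples of the second-kind differentials $d\xi$ and $\wp_\tau(\xi)\,d\xi=-d\zeta_\tau(\xi)$, where $\zeta_\tau$ is the Weierstrass zeta function. Their cohomology classes in $H^1(E_\tau;\C)$ are determined by their $\a$- and $\bb$-periods: for $d\xi$ these are $1$ and $\tau$, giving $[d\xi]=\tau\a-\bb=\w/(2\pi i)$ via Poincar\'e duality $\adual=-\bb,\bdual=\a$; for $\wp_\tau\,d\xi$ they are $-\eta_1(\tau)$ and $-\eta_2(\tau)$, the quasi-periods of $\zeta_\tau$, giving $[\wp_\tau\,d\xi]=\eta_1\bb-\eta_2\a$. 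Rewriting in the frame $(\a,\w)$ via $\bb=\tau\a-\w/(2\pi i)$, invoking Legendre's relation $\eta_1\tau-\eta_2=2\pi i$ together with the classical identification of $\eta_1(\tau)$ with a multiple of $G_2(\tau)$ (fixed by the normalization of $G_2$ in Section~\ref{eis}), and absorbing the remaining constants into the normalization of $P_2, P_3$, one obtains $\T=\w/(2\pi i)$ and $\Ss=(\a-2G_2(\tau)\w)/(2\pi i)$ as asserted.

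With the frame identification in place, the compatibility of the two connections is essentially automatic: $\nabla_0^\an$ is characterized by having $\a, \bb$ as flat sections on $\h$, while $\nabla_0$ is the Gauss--Manin connection, which by definition describes the variation of the algebraic cohomology classes $\T, \Ss$ in any flat topological frame, so both reduce to $d$ in the basis $(\a,\bb)$ on $\h$. Equivariance under $\SL_2(\Z)$ (the quasi-modularity of $G_2$ precisely compensates the failure of $\a$ alone to be modular-invariant in the holomorphic frame) and under $\G_m$ (the weights $+1$ and $-1$ of $\T$ and $\Ss$ match the weights of $\a$ and $\w$) allows the identification to descend to $\M_{1,1}^\an$. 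Since $G_2$ is holomorphic at the cusp and $(\a,\w)$ extends to a frame of $\Hbar^\an$ on the $q$-disk $\D$, the change of basis between $(\T,\Ss)$ and $(\a,\w)$ is holomorphic and invertible there, and the identification extends uniquely to $\Mbar_{1,1}^\an$ by uniqueness of the Deligne canonical extension of the common regular-singular flat bundle. The substantive technical step is the period computation for $\wp_\tau\,d\xi$, whose $G_2$-coefficient emerges from combining Legendre's relation with the classical link between the quasi-period $\eta_1$ and $G_2$.
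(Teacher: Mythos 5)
The paper does not actually prove this proposition; it refers to Proposition 5.2 of \cite{umem} (Hain--Matsumoto) for the period computation and explains, in Remark~\ref{stdual}, that the present formulas differ from theirs by a factor of $2\pi i$ because $\T$ and $\Ss$ are defined to be $(2\pi i)^{-1}$ times the Poincar\'e duals of $[dx/y]$ and $[x\,dx/y]$, viewed as a $\Q$-basis of first homology with $\T=\check\Ss$, $\Ss=-\check\T$. Your blind proof supplies what the paper omits: a from-scratch period computation. Its substantive core is correct --- the $\a$- and $\bb$-periods of $\wp_\tau\,d\xi$ are the negatives of the quasi-periods $\eta_1,\eta_2$ of $\zeta_\tau$; Legendre's relation $\eta_1\tau-\eta_2=2\pi i$ eliminates $\eta_2$; and $\eta_1=e_2(\tau)=2(2\pi i)^2G_2(\tau)$ produces the $G_2$-coefficient.

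There is, however, a bookkeeping gap and you attribute it to the wrong source. Under the Weierstrass embedding $\xi\mapsto(P_2,-2P_3)=\big((2\pi i)^{-2}\wp_\tau,(2\pi i)^{-3}\wp'_\tau\big)$, the form $dx/y$ pulls back to $2\pi i\,d\xi$, not $d\xi$, and $x\,dx/y$ to $(2\pi i)^{-1}\wp_\tau\,d\xi$. Your computation correctly gives $[d\xi]=\w/(2\pi i)$, hence $[dx/y]=\w$ and $[x\,dx/y]=\a-2G_2\w$. These are Hain--Matsumoto's normalizations and are a factor of $2\pi i$ larger than the proposition's $\T$ and $\Ss$. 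The claim that the discrepancy can be ``absorbed into the normalization of $P_2,P_3$'' is misdirected: that normalization is already what produces $dx/y=2\pi i\,d\xi$, and it cannot also produce the further $(2\pi i)^{-1}$. The missing ingredient is precisely the rescaled Poincar\'e duality of Remark~\ref{stdual}, under which the de Rham sections $\T,\Ss$ of $\cH^\DR_{\vec 1}$ are $(2\pi i)^{-1}$ times the Poincar\'e duals of $[dx/y],[x\,dx/y]$ in $H_1$. Once that convention is invoked, your computation closes correctly, and the remaining descent and canonical-extension steps in your proposal are standard and sound.
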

\begin{remark}\label{stdual}
Our formulas for $\T$ and $\Ss$ differ from those in Proposition 5.2 of \cite{umem} by a factor of $2\pi i$. The reason is that the cup product of $dx/y$ and $xdx/y$ is $2\pi i$, and we have multiplied their Poincar\'e duals by $(2\pi i)^{-1}$ to obtain a $\Q$-de Rham basis of the first homology \cite[\S 20]{kzb}, such that $\T=\check\Ss$ and $\Ss=-\check\T$. More explanations are provided in Section \ref{Hddq} below.
\end{remark}

\subsection{The fiber of $\H$ at the cusp}\label{Hddq}
To better understand various Betti and de Rham sections of $\cH^\an$, we observe the fiber $H:=H_\ddq$ at the cusp associated to the tangent vector $\ddq$. One can compute the limit mixed Hodge structure on $H$ (computed in \cite[\S 5.4]{umem}), which is isomorphic to $\Q(0)\oplus\Q(-1)$, with Betti realization
$H^\B=\Q\a\oplus\Q\bb$, and de Rham realization
$H^\DR=\Q\a\oplus\Q\w$. Note that on $H$, $-\bb=\adual=\w/2\pi i$ spans $\Q(-1)$ and $\a=\bdual$ spans $\Q(0)$.

One can think of $H$ as the cohomology $H^1(E_\ddq)$. It is better to work with first homology, which is the abelian quotient of the fundamental group. We use Poincar\'e duality to identify $H_1(E)$ with $H^1(E)(1)$. Therefore, we have $H_1(E_\ddq)=H(1)=\Q(1)\oplus\Q(0)$, with Betti realization $\Q\a\oplus\Q\bb$ and de Rham realization $\Q\AA\oplus\Q\T$, where  
$$
\AA:=\a/2\pi i\text{ and } \T:=\w/2\pi i.
$$
Note that on $H(1)$, $\a=2\pi i\,\AA$ spans $\Q(1)$ and $-\bb=\T$ spans $\Q(0)$.

By Proposition \ref{HBDR}, we can write $\Ss$ in terms of this de Rham framing $\AA$, $\T$ of $\H$ (or in fact $\H(1)$)
$$
\Ss=\AA-2G_2(\tau)\T.
$$
We will use these sections $\AA$, $\Ss$ and $\T$ to write down the universal elliptic KZB connection in later sections.

\section{Eisenstein Elliptic Functions and the Jacobi Form $F(\xi,\eta,\tau)$}
\subsection{Eisenstein series}\label{eis}
A modular form\footnote{In this paper, we will only consider modular forms of level one, i.e. those with respect to the entire modular group $\SL_2(\Z)$.} of weight $k$ is a holomorphic function $f(\tau)$ on the upper half plane $\h$ that satisfies
$$f(\gamma\tau)=(c\tau+d)^k f(\tau),\qquad \text{where } \gamma=\begin{pmatrix} a & b \cr c & d\end{pmatrix}\in\SL_2(\Z) \text{ , }\tau\in\h,\text{ and } \gamma\tau=\frac{a\tau+b}{c\tau+d},$$
and it extends to a holomorphic function on the $q$-disk.

Since $f(\tau+1)=f(\tau)$, it has a Fourier expansion of the form
$$f(\tau)=\sum_{n\in\Z} a_nq^n\qquad\text{where }q=e^{2\pi i \tau}.$$
Since a modular form is holomorphic at the cusp, this $q$-series starts with terms of nonnegative $q$ powers. Moreover, a modular form is called a cusp form if the leading coefficient $a_0$ of its $q$-series is $0$.



\begin{example}{\bf Weight $k$ Eisenstein Series $e_k$.}
For integer $k>0$, $\tau\in\h$, define
$$e_k(\tau):=\sum_{\substack{n,m\\ (n,m)\neq (0,0)}}(n\tau+m)^{-k}.$$
Note that $e_k=0$ if $k$ is odd. For $k\ge 4$, the series for $e_k$ is absolutely convergent. For $k=2$, it has to be summed in a certain way, called {\it Eisenstein summation} \cite[Chap. III]{weil}.
\end{example}

\begin{example}{\bf Normalized Weight $k$ Eisenstein Series $G_k$.}
We will normalize the Eisenstein series following Zagier \cite{zagier}. Define $G_k$ to be zero when $k$ is odd. For $k\ge 1$, define $$G_{2k}(\tau):=\frac{1}{2}\frac{(2k-1)!}{(2\pi i)^{2k}}e_{2k}(\tau).$$ It has Fourier expansion
$$G_{2k}(\tau)=-\frac{B_{2k}}{4k}+\sum_{n=1}^{\infty}\sigma_{2k-1}(n)q^n$$
where $B_k$'s are Bernoulli numbers\footnote{One can define Bernoulli numbers $B_n$ by $\frac{x}{e^x-1}=\sum_{n=0}^{\infty}B_n\frac{x^n}{n!}$. Note that $B_0=1$, $B_1=-1/2$ and that $B_{2k+1}=0$ when $k>0$.} and $\sigma_k(n)=\sum_{d|n}d^k$, with
$$G_{2k}|_{q=0}=-\frac{B_{2k}}{4k}=\frac{(2k-1)!}{(2\pi i)^{2k}}\zeta(2k).$$ 

When $k>2$ is even, $G_k$ is holomorphic on the upper half plane, satisfying $G_k(\gamma\tau)=(c\tau+d)^kG_k(\tau)$, and it is holomorphic at the cusp. Therefore, it is a modular form of weight $k$. 
When $k=2$, we have that $G_2$ satisfies (cf. \cite{zagier}) 
$$G_2(\gamma\tau)=(c\tau+d)^2G_2(\tau)+ic(c\tau+d)/4\pi.$$

It is well known that the ring of all modular forms is the polynomial ring $\Q[G_4,G_6]$.
\end{example}

\subsection{Eisenstein elliptic functions}\label{eisell}
For $k>0$, $\tau\in \mathfrak{h}$ and $\xi\in\C$, define Eisenstein elliptic functions \cite{weil} by
$$E_k(\xi, \tau):={\sum_{n,m}}(\xi+n\tau+m)^{-k}.$$
Note that $E_k$ is absolutely convergent for $k\ge 3$. For $k=1,2$, $E_k$ is summed by {\it Eisenstein summation} \cite[Chap. III]{weil}.
We will need the following formula, which is adapted from equation (9) of Chap. IV in \cite{weil}.
\begin{formula}\label{f2}
\begin{equation*} 
2\pi i \frac{\partial E_1}{\partial \tau}=E_3-E_1E_2.
\end{equation*}
\end{formula}

Now we define functions $P_k(\xi,\tau)$ for $k\geq2$
$$P_k(\xi,\tau):=(2\pi i)^{-k}(E_k(\xi,\tau)-e_k(\tau)).$$
Note that up to a scalar, $P_2$ and $P_3$ are the Weierstrass $\wp$-function $\wp_{\tau}(\xi)$ and its derivative respectively.

These $P_k$'s satisfy recurrence relations (cf. \cite{weil}): for $m\geq 3$, $n\geq 3$,
\begin{align*}
P_mP_n-P_{m+n}&=\frac{(-1)^n}{(n-1)!}\sum_{h=1}^{m-2}\frac{2}{h!}G_{n+h}P_{m-h}
\\&+\frac{(-1)^m}{(m-1)!}\sum_{k=1}^{n-2}\frac{2}{k!}G_{m+k}P_{n-k}
\\&+(-1)^m\frac{2(m+n)}{m!n!}G_{m+n}
\end{align*}
The same relation also holds for $m=2, n\geq2$.

By these relations, we know that the algebra generated by Eisenstein elliptic functions is the ring $\K[P_2,P_3]$ with coefficients in $\K:=\Q[G_4,G_6]$. 
\begin{remark}\label{recur}
If variable $\tau$ is fixed, one can use $P_2, P_3$ to embed the elliptic curve $E_\tau$ into a cubic in $\mathbb P^2$ (see Section \ref{alg}), then $P_k$'s are algebraic functions on this elliptic curve. In particular, if the elliptic curve $E_\tau$ is defined over a field $\K$ of characteristic $0$, then there is an embedding with $G_4, G_6\in\K$, so that $G_k\in\K$ for all $k\ge 4$. Therefore, $P_k$'s are polynomials of $P_2,P_3$ with coefficients in $\K$, i.e. $P_k$'s are in the coordinate ring $\OO(E_{\tau/\K})$, which is a $\K$-algebra generated by $P_2,P_3$. 
\end{remark}

\subsection{The Jacobi forms $F(\xi,\eta,\tau)$ and $F^{\zag}(u,v,\tau)$}

There are two different versions of the Jacobi form $F$: one $F(\xi,\eta,\tau)$ used by Levin--Racinet \cite{levin-racinet}, and another $F^{\zag}(u,v,\tau)$ used by Zagier \cite{zagier}. They are related to each other by 
$$F(\xi,\eta,\tau)=2\pi iF^{\zag}(2\pi i\xi,2\pi i\eta,\tau).$$
In this paper, we will use $F^{\zag}$ exclusively. We list some properties of $F^\zag$ here and leave the most relevant ones to the next subsection. These properties all follow from the Theorem in \cite[\S 3]{zagier}.
\begin{proposition}
The function $F^\zag(u,v,\tau)$ has the following properties:
\begin{itemize}
\item[(1)] $F^\zag(u,v,\tau)=F^\zag(v,u,\tau)$.

\item[(2)] $F^\zag(u+2\pi i,v,\tau)=F^\zag(u,v,\tau)$.

\item[(3)] $F^\zag(u+2\pi i\tau,v,\tau)=\exp(-v)F^\zag(u,v,\tau)$.

\item[(4)] $F^\zag(\frac{u}{c\tau+d},\frac{v}{c\tau+d},\gamma\tau)=(c\tau+d)\exp(\frac{cuv}{2\pi i(c\tau+d)})F^\zag(u,v,\tau)$, where $\gamma=\begin{pmatrix} a & b \cr c & d\end{pmatrix}\in\SL_2(\Z)$ and $\gamma\tau=\frac{a\tau+b}{c\tau+d}$.
\end{itemize}
\end{proposition}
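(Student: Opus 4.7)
The plan is to invoke Zagier's Theorem in \cite[\S 3]{zagier}, which identifies $F^\zag(u,v,\tau)$ with an explicit closed-form expression, most conveniently the theta-quotient
\[
F^\zag(u,v,\tau) = \frac{\theta'(0,\tau)\,\theta(u+v,\tau)}{\theta(u,\tau)\,\theta(v,\tau)}
\]
(equivalently the Kronecker--Eisenstein series), and then to derive the four transformation laws from the corresponding properties of the Jacobi theta function $\theta(z,\tau)$, for which the periods are $2\pi i$ and $2\pi i\tau$ in the chosen normalization (consistent with $q=e^{2\pi i\tau}$ and the given rescaling $F(\xi,\eta,\tau)=2\pi iF^\zag(2\pi i\xi,2\pi i\eta,\tau)$).

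Property (1) would follow immediately from the symmetry $u+v=v+u$ in the numerator together with the product structure of the denominator. For (2) and (3), I would invoke the elliptic (quasi-)periodicities
\[
\theta(z+2\pi i,\tau) = -\theta(z,\tau),\qquad \theta(z+2\pi i\tau,\tau) = -\exp(-z-\pi i\tau)\,\theta(z,\tau).
\]
In (2), the two factors involving $u$ each pick up a sign $(-1)$ which cancels in the quotient. In (3), the factors $\theta(u+v,\tau)$ and $\theta(u,\tau)$ each acquire $-\exp(-\pi i\tau)$ times an exponential that is linear in the argument; the signs and the $\pi i\tau$ contributions cancel between numerator and denominator, while the linear exponentials combine to leave exactly $\exp(-(u+v)+u)=\exp(-v)$.

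The heart of the matter is the modular transformation (4). I would substitute the $\SL_2(\Z)$ transformation law of $\theta$,
\[
\theta\!\left(\frac{z}{c\tau+d},\gamma\tau\right) = \varepsilon(\gamma)\,(c\tau+d)^{1/2}\exp\!\left(\frac{cz^{2}}{4\pi i(c\tau+d)}\right)\theta(z,\tau),
\]
with an eighth root of unity $\varepsilon(\gamma)$, together with the companion weight-$3/2$ transformation of $\theta'(0,\tau)$, which carries the same multiplier. The $\varepsilon(\gamma)$ factors then cancel in the ratio, the half-integral powers of $c\tau+d$ combine to the overall factor $(c\tau+d)$, and the quadratic exponentials collapse via $(u+v)^{2}-u^{2}-v^{2}=2uv$ to the claimed $\exp\!\bigl(cuv/(2\pi i(c\tau+d))\bigr)$.

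The main obstacle is bookkeeping: tracking $\varepsilon(\gamma)$ and the branch of $(c\tau+d)^{1/2}$ consistently across the three theta-factors and $\theta'(0,\tau)$. A cleaner route is to reduce (4) to the generators $T:\tau\mapsto\tau+1$ and $S:\tau\mapsto -1/\tau$ of $\SL_2(\Z)$. Invariance under $T$ is essentially built in (with property (2) handling the effect on $u,v$), while invariance under $S$ reduces to a single application of the Jacobi inversion identity for $\theta$, in which the delicate multiplier and branch issues have already been resolved once and for all.
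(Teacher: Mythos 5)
Your proposal is correct, but it takes a more explicit route than the paper does. The paper's entire proof is a one-line citation: ``These properties all follow from the Theorem in \cite[\S 3]{zagier}.'' And indeed Zagier's Theorem in that section already states, as separate items, the symmetry, the elliptic quasi-periodicities, the modular transformation, and the theta-quotient representation of $F$, so in the paper nothing further needs to be derived. You instead select just one consequence of Zagier's theorem (the theta quotient $F^\zag = \theta'(0)\theta(u+v)/\theta(u)\theta(v)$) as a starting point and then rederive (1)--(4) from the classical transformation laws of $\theta$. The calculations you sketch do go through: the elliptic periods $2\pi i$ and $2\pi i\tau$ are consistent with $q=e^{2\pi i\tau}$, the sign and $e^{-\pi i\tau}$ factors cancel between numerator and denominator in (2) and (3) exactly as you say, and for (4) the key observations are correct --- $\theta'(0,\tau)$ has the same multiplier $\varepsilon(\gamma)$ with weight $3/2$ (obtained by differentiating the theta transformation law at $z=0$), and the three quadratic exponentials collapse via $(u+v)^2-u^2-v^2=2uv$. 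Your suggestion to reduce (4) to the generators $T$ and $S$ is the standard way to dodge the branch/multiplier bookkeeping. The trade-off is that your argument is more self-contained and exhibits the mechanism behind the transformation laws, but it reproves facts that Zagier has already packaged; the paper's choice is terser because it simply leans on the whole theorem rather than on one piece of it.
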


\subsection{Some useful formulas}
In this section, we provide some formulas that will be used in later sections.

First, we express the Jacobi form $F^\zag$ in terms of Eisenstein elliptic functions $P_k=(2\pi i)^{-k}(E_k-e_k)$ for $k\ge 2$ and $(2\pi i)^{-1}(E_1-e_1)=(2\pi i)^{-1}E_1$. This is well-known, stated as Proposition-Definition 4.iii) in \cite[\S 3.4]{brown-levin}, also stated as equation (13) in \cite{levin-racinet} but with a typo\footnote{The equation (13) in \cite{levin-racinet} is missing a $\xi$ on the left hand side.}. 
\begin{formula}\label{f1}
\begin{equation*} 
TF^\zag(2\pi i\xi,T,\tau)=\exp\Big(-\sum\limits_{k=1}^{\infty}\frac{(-T)^k}{k}P_k(\xi,\tau)\Big).
\end{equation*}
\end{formula}

\begin{proof}
By \cite{zagier} p456, (viii), 
$$F^{\zag}(u,v,\tau)=\frac{u+v}{uv}\exp\Big(\sum_{k>0}\frac{2}{k!}[u^k+v^k-(u+v)^k]G_k(\tau)\Big).$$
Multiplying by $v$, then take logarithm on both sides, we get
\begin{align*}
\log(vF^{\zag}(u,v,\tau))
  &=\log(1+\frac{v}{u})+\sum_{k>0}\frac{2}{k!}[u^k+v^k-(u+v)^k]G_k(\tau)
\\&=-\sum_{k=1}^{\infty}\frac{(-v/u)^k}{k}+\sum_{k=1}^{\infty}\frac{2v^k}{k!}[1+(\frac uv)^k-(1+\frac uv)^k]G_k(\tau)
\end{align*}
Let $u=2\pi i \xi$, $v=T$, and rescale $G_k$ back to $e_k$, we have
\begin{align*}
\log(TF^\zag(2\pi i\xi,T,\tau))&=-\sum_{k=1}^{\infty}\frac{(-T/2\pi i \xi)^k}{k}+\sum_{k=1}^{\infty}\frac{2T^k}{k!}\left[1+(\frac{2\pi i\xi}{T})^k-(1+\frac{2\pi i\xi}{T})^k\right]\frac12\frac{(k-1)!}{(2\pi i)^k}e_k(\tau)
\\&=-\sum_{k=1}^{\infty}\frac{(-T)^k}{k}(2\pi i\xi)^{-k}-\sum_{l=1}^{\infty}\frac{T^l}{l}(2\pi i)^{-l}\sum_{k=1}^{l-1}{l \choose k}(\frac{2\pi i \xi}{T})^{l-k}e_l(\tau)
\\&=-\sum_{k=1}^{\infty}\frac{(-T)^k}{k}(2\pi i)^{-k}\frac{1}{\xi^k}-\sum_{k=1}^{\infty}\frac{T^k}{k}(2\pi i)^{-k}\sum_{l=k+1}^{\infty}\frac{k}{l}{l \choose k}\xi^{l-k}e_l(\tau)
\\&=-\sum_{k=1}^{\infty}\frac{(-T)^k}{k}(2\pi i)^{-k}\frac{1}{\xi^k}-\sum_{k=1}^{\infty}\frac{(-T)^k}{k}(2\pi i)^{-k}(-1)^k\sum_{l=k+1}^{\infty}{l-1\choose k-1}\xi^{l-k}e_l(\tau)
\\&=-\sum_{k=1}^{\infty}\frac{(-T)^k}{k}(2\pi i)^{-k}\left[\frac{1}{\xi^k}+(-1)^k\sum_{l=k+1}^{\infty}{l-1\choose k-1}\xi^{l-k}e_l(\tau)\right]
\\&=-\sum_{k=1}^{\infty}\frac{(-T)^k}{k}(2\pi i)^{-k}(E_k-e_k)=-\sum_{k=1}^{\infty}\frac{(-T)^k}{k}P_k(\xi,\tau)
\end{align*}
where the last line follows from equation (10) of Chap. III in \cite{weil}, and the facts that ${l-1\choose k-1}=0$ for $l<k$ and that $e_k(\tau)=0$ for odd $k$. After taking exponential on both sides, (\ref{f1}) follows.
\end{proof}

Taking partial derivative with respect to $T$, we have
\begin{formula}\label{cor of f1}
\begin{equation*} 
T\frac{\partial F^\zag}{\partial T}(2\pi i\xi,T,\tau)=\exp\Big(-\sum\limits_{k=1}^{\infty}\frac{(-T)^k}{k}P_k(\xi,\tau)\Big)\Big(\sum\limits_{k=1}^\infty (-T)^{k-1}P_k(\xi,\tau)-\frac{1}{T}\Big).
\end{equation*}
\end{formula}

\section{Unipotent Completion of a Group and its Lie Algebra}\label{unip}

Suppose we are given a finitely generated group $\Gamma$, and a field $\K$ of characteristic 0. The group algebra $\K\Gamma$ is naturally a Hopf algebra with coproduct, antipode and augmentation given by
$$\Delta:g\mapsto g\otimes g,\quad i:g\mapsto g^{-1},\quad \epsilon:g\mapsto 1.$$
Note that it is cocommutative but not necessarily commutative, and thus does not correspond to the coordinate ring of an (pro-)algebraic group. It is natural to consider its (continuous) dual, which is commutative. We define the unipotent completion $\Gamma^{\un}$ of $\Gamma$ over $\K$, an (pro-)algebraic group, by its coordinate ring
$$\OO(\Gamma^{\un}_{/\K})=\Hom_{\cts}(\K\Gamma,\K):=\varinjlim_n\Hom(\K\Gamma/I^n,\K),$$
where we give $\K\Gamma$ a topology by powers of its augmentation ideal $I:=\ker\epsilon$. The set of its $\K$-rational points $\Gamma^\un(\K)$ is in 1-1 correspondence with the set of $$\{\text{ring homomorphisms }\OO(\Gamma^\un_{/\K})\to\K\}.$$
For example, any $\gamma\in\Gamma$ gives a ring homomorphism $\OO(\Gamma^\un_{/\K})\to\K$ by evaluating $\OO(\Gamma^\un_{/\K})$ at $\gamma$, thus determines a $\K$-point $\gamma\in\Gamma^\un(\K)$.

For the purpose of this paper, we only need to consider the case of $\Gamma$ being a free group.

\subsection{The unipotent completion of a free group}\label{unifree}

Suppose that $\Gamma$ is the free group $\langle x_1,\dots,x_n\rangle$ generated by the set $\{x_1,\dots,x_n\}$. The coordinate ring $\OO(\Gamma^\un)$ of its unipotent completion $\Gamma^\un$ over $\K$ is a $\K$ vector space spanned by a basis $\{a_I\}$ indexed by tuples $I=(i_1,i_2,\cdots, i_r)$, where $i_j\in\{1,2,\cdots,n\}$. If the index is empty, then $a_{\varnothing}\equiv 1$; if the index tuple only consists of one number $I=(i)$, we will simply write $a_I$ as $a_i$. The product structure on $\OO(\Gamma^\un)$ is induced by shuffle product $\sha$ and linearity $$a_I\cdot a_J=\sum_{K\in\ I\sha J}a_K.$$
For each $\K$-point $\gamma\in\Gamma^\un(\K)$, ``coordinate function'' $a_I$ takes value $a_I(\gamma)$ in $\K$, and \begin{equation}\label{sh}
a_I(\gamma)\cdot a_J(\gamma)=\sum_{K\in\ I\sha J}a_K(\gamma).
\end{equation}
Note that it is natural to define $\{a_1,\cdots,a_n\}$ as the dual basis of $\{x_1,\cdots,x_n\}$, so that $a_i(x_j)=\delta_{ij}$. 

To determine the structure of $\Gamma^\un$, consider the ring $\K\ll X_1,\dots,X_n\rr$ of formal power series in the non-commuting indeterminants $X_j$. It is a Hopf algebra with each $X_j$ being primitive, and its augmentation ideal is the maximal ideal $I=(X_1,\dots,X_n)$. 

There is a unique group homomorphism
\begin{align*}
\theta:\Gamma &\to \K\ll X_1,\dots,X_n\rr\\
\gamma &\mapsto \sum_I a_I(\gamma)X_I
\end{align*}
that takes $x_j$ to $\exp(X_j)$, where for $I=(i_1,i_2,\cdots, i_r)$, define $X_I:=X_{i_1}X_{i_2}\cdots X_{i_r}$. 

For any $\K$-point $\gamma\in\Gamma^\un(\K)$, the element $\sum_I a_I(\gamma)X_I$ is group-like by (\ref{sh}). This induces a continuous isomorphism
$$
\hat{\theta} : \Gamma^\un(\K) \to \{\text{group-like elements in }\K\ll X_1,\dots,X_n\rr^{\wedge}\},
$$
where $\K\ll X_1,\dots,X_n\rr^{\wedge}$ is completed from $\K\ll X_1,\dots,X_n\rr$ with respect to its augmentation ideal.

It is easy to use universal mapping properties to prove:

\begin{proposition}
The homomorphism $\hat{\theta}$ is an isomorphism of complete Hopf algebras.
\qed
\end{proposition}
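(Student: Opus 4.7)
The plan is to construct a continuous inverse to $\hat\theta$ at the level of complete Hopf algebras and then read off the assertion about group-like elements. Write $A := \widehat{\K\Gamma}$ for the $I$-adic completion of the group ring, where $I = \ker \epsilon$; this is a cocommutative complete Hopf algebra with each $\gamma \in \Gamma$ group-like, and $\OO(\Gamma^\un_{/\K})$ is its continuous dual. Write $B := \K\ll X_1,\dots,X_n\rr^\wedge$, a complete Hopf algebra with each $X_j$ primitive. The map $\theta$ extends by continuity to a $\K$-algebra map $\hat\theta : A \to B$; since each topological generator $x_j \in A$ maps to the group-like element $\exp(X_j)$, the map $\hat\theta$ is automatically a morphism of complete Hopf algebras.

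For the inverse, I would exploit the universal property of $B$: any continuous $\K$-algebra map out of $B$ is determined by where it sends the $X_j$, and any assignment of the $X_j$ to elements of the augmentation ideal of a complete $\K$-algebra extends uniquely. Since $x_j - 1 \in I$, the series $\log x_j := \sum_{k \ge 1} (-1)^{k+1}(x_j - 1)^k/k$ converges in $A$, so the rule $X_j \mapsto \log x_j$ defines a continuous algebra map $\psi : B \to A$. By the formal inverse identities in any complete filtered algebra, $\psi \circ \hat\theta$ sends $x_j \mapsto \exp(\log x_j) = x_j$ and $\hat\theta \circ \psi$ sends $X_j \mapsto \log(\exp X_j) = X_j$, so both compositions are the identity on a dense set of topological generators, hence everywhere by continuity.

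Passing to group-like elements on each side, and using the fact that $\K$-points of $\Gamma^\un$ correspond precisely to the group-like elements of $A$ (via the identification $\OO(\Gamma^\un_{/\K}) = \Hom_\cts(A,\K)$ and the coproduct $g \mapsto g\otimes g$ of the previous section), then recovers the stated bijection. The main obstacle I anticipate is the bookkeeping around the two filtrations: verifying that $\psi$ is continuous requires comparing the $I$-adic filtration on $A$ with the filtration of $B$ by powers of its augmentation ideal, which rests on the estimate $(x_j-1)^k \in I^k$ and its analog for monomials in the $X_j$. Once the filtrations are aligned and the universal property of the free associative power series algebra is invoked, the formal identities $\log \circ \exp = \id$ and $\exp \circ \log = \id$ close the argument.
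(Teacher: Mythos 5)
Your argument is correct and is precisely the ``universal mapping properties'' approach the paper invokes (the paper gives no further details, just the remark preceding the proposition). The two filtration-comparison worries you flag at the end are not real obstacles: the continuity of $\hat\theta$ follows from $\hat\theta(x_j-1)=\exp(X_j)-1$ lying in the augmentation ideal of $B$ (so $\hat\theta(I^k)\subseteq\mathfrak m_B^k$), and the continuity of $\psi$ is automatic from the universal property of $B=\K\ll X_1,\dots,X_n\rr^\wedge$, which by definition classifies \emph{continuous} algebra maps out of $B$ by $n$-tuples in the maximal ideal of the target.
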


\begin{corollary}
The restriction of $\hat{\theta}$ induces a natural isomorphism
$$
d\hat{\theta} : \Lie(\Gamma^\un(\K)) \to \LL_{\K}(X_1,\dots,X_n)^\wedge
$$
of topological Lie algebras.
\end{corollary}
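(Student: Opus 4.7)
The plan is to obtain $d\hat\theta$ as the tangent map at the identity of the Hopf algebra isomorphism $\hat\theta$ from the previous proposition, and then identify both sides with the primitive elements of the relevant complete Hopf algebras. Concretely, for the prounipotent group $\Gamma^\un$ the completed group algebra $\K\Gamma^{\wedge}:=\varprojlim_n\K\Gamma/I^n$ is a complete cocommutative Hopf algebra that is dual to $\OO(\Gamma^\un)$, and the Lie algebra $\Lie(\Gamma^\un(\K))$ is canonically identified with its subspace of primitive elements (those $X$ with $\Delta X=X\otimes 1+1\otimes X$), with bracket induced by the associative product. This follows from the standard description of tangent spaces at the identity of a (pro)algebraic group via $\Spec\K[\epsilon]/(\epsilon^2)$-points.

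Next, I would extend $\hat\theta$ by continuity and linearity to an isomorphism of complete Hopf algebras $\K\Gamma^{\wedge}\xrightarrow{\sim}\K\ll X_1,\dots,X_n\rr^{\wedge}$, which is exactly the content of the preceding proposition after dualization. Under this isomorphism, primitive elements correspond to primitive elements. So the task reduces to identifying the primitives of $\K\ll X_1,\dots,X_n\rr^{\wedge}$ as a complete topological Lie algebra.

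The key classical input is the theorem of Friedrichs (equivalently, the Milnor--Moore theorem applied to the tensor Hopf algebra, or the Poincar\'e--Birkhoff--Witt theorem exhibiting $\K\langle X_1,\dots,X_n\rangle$ as the universal enveloping algebra of $\LL_\K(X_1,\dots,X_n)$): the primitive elements of $\K\langle X_1,\dots,X_n\rangle$, where each $X_j$ is primitive, are precisely the free Lie algebra $\LL_\K(X_1,\dots,X_n)$ on the generators. Taking the $I$-adic completion preserves primitivity, and primitive elements of the completed Hopf algebra are exactly $\LL_\K(X_1,\dots,X_n)^{\wedge}$. Compatibility with brackets is automatic since in both settings the bracket is the commutator of the associative product.

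Finally, to upgrade to an isomorphism of \emph{topological} Lie algebras, I would observe that $\hat\theta$ respects the augmentation ideal filtrations on both sides (it sends the filtration induced by powers of $I\subset\K\Gamma$ to the filtration by powers of the augmentation ideal of $\K\ll X_1,\dots,X_n\rr$), and the filtrations on the primitives are the induced subspace filtrations. The main subtlety I anticipate is being careful with the dualization from the statement of the previous proposition, which was phrased in terms of $\OO(\Gamma^\un)$ and group-like elements, to the completed group-algebra picture needed to apply Friedrichs's theorem; once that bookkeeping is in place, the corollary is formal.
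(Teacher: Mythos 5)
Your proposal is correct and follows essentially the same route as the paper: identify $\Lie(\Gamma^\un(\K))$ with the primitive elements of the completed (dual) Hopf algebra, observe that $\hat\theta$ induces an isomorphism on primitives, and invoke the classical fact (Friedrichs/PBW) that the primitives of $\K\ll X_1,\dots,X_n\rr^\wedge$ form $\LL_\K(X_1,\dots,X_n)^\wedge$. You have simply filled in the dualization bookkeeping and the filtration compatibility that the paper leaves implicit.
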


\begin{proof}
This follows immediately from the fact that $\hat{\theta}$ induces an
isomorphism on primitive elements and the well-known fact that the set of
primitive elements of the power series algebra $\K\ll X_1,\dots,X_n\rr$ is the
completed free Lie algebra $\LL_{\K}(X_1,\dots,X_n)^\wedge$.
\end{proof}

\begin{remark}\label{BCH}
By the Baker-Campbell-Hausdorff formula, the exponential map 
$$
\exp: \LL_{\K}(X_1,\dots,X_n)^\wedge\to\{\text{group-like elements in }\K\ll X_1,\dots,X_n\rr^{\wedge}\},
$$
is a group isomorphism. Therefore, $\Gamma^\un(\K)$ and its Lie algebra $\Lie(\Gamma^\un(\K))$ are isomorphic as groups.
\end{remark}

\section{Universal Elliptic KZB Connection---Analytic Formula}\label{KZB}

In this section, we describe the main object to be studied in this paper, working complex analytically. There is a canonical vector bundle $\mathcal P$ (resp. $\p$) over $\mathcal{M}_{1,2}$ whose fiber over a moduli point $[E',x]$ is the unipotent fundamental group $\pi_1^{\un}(E',x)$ (resp. $\Lie(\pi_1^{\un}(E',x))$).\footnote{From Remark \ref{BCH}, the unipotent completion of a group and its Lie algebra are isomorphic, we will regard this bundle as a local system of both unipotent fundamental groups and Lie algebras, whichever is appropriate.} This vector bundle comes with an integrable connection, which is called the universal elliptic KZB connection. Analytic formulas for this connection have been given in different forms by Levin and Racinet \cite{levin-racinet} and by Calaque, Enriquez and Etingof \cite{cee}.

The universal elliptic KZB connection for the bundle $\cP$ over $\E'$ actually lives on $\E$, even $\Ebar$. Since $\cP$ over $\E'$ is a unipotent vector bundle, using Deligne's canonical extension, we obtain $\Pbar$ over $\Ebar$ by extending $\cP$ across the boundary divisors, the identity section and the nodal cubic. The universal elliptic KZB connection has regular singularities around these divisors, as is shown in \cite[\S 12, \S13]{kzb}. 

By Section \ref{unifree}, the fiber of $\cP$ over a point $[E',x]$ is the Lie algebra $\Lie(\pi_1^{\un}(E',x))$ of its unipotent fundamental group $\pi_1^\un(E',x)$, which can be identified with $\LL_\C(\AA,\T)^\wedge$, where $\AA$ and $\T$ are the sections of $\H$ defined in \S\ref{Hddq}. Note that these sections, when pulled back to $\h$, trivialize the vector bundle $\H_\h$, with a factor of automorphy \footnote{The factor of automorphy on $\H_\h$ is easily computed to be
$
M_\gamma(\tau)=
\begin{pmatrix}
 (c\tau+d)^{-1} & 0 \cr
 c & c\tau+d
\end{pmatrix}
$
for $\gamma=\begin{pmatrix} a & b \cr c & d \end{pmatrix}\in\SL_2(\Z)$, cf. \cite[Example 3.4]{kzb}. }. This factor of automorphy lifts to a general one on the bundle $\cP$ over $\C\times\h$\footnote{The general factor of automorphy on $\cP$ is
$
\widetilde{M}_\gamma(\xi,\tau)=
\begin{cases}
M_\gamma(\tau)\circ\exp(\frac{c\xi\T}{2\pi i(c\tau+d)}) & \gamma=\begin{pmatrix} a & b \cr c & d \end{pmatrix}\in\SL_2(\Z) \\
\exp(-m\T) & (m,n)\in\Z^2
\end{cases}
$, cf. \cite[\S 6 (6.2)]{kzb}.}.

We now write the connection form in terms of analytic coordinates $(\xi,\tau)$ on $\C\times\h$. It is shown to be $\SL_2(\Z)\ltimes \Z^2$-invariant in terms of the factor of automorphy we just described and flat in \cite[\S 9]{kzb}. Therefore, it descends to a flat connection on the bundle $\cP$ over the orbifold $\E$.

The connection is defined by $$\nabla^\an f=df+\omega f,$$ with a 1-form 
$$\omega\in\Omega^1((\C\times\h)\log\Lambda_\h)\otimes\Der\,\LL_\C(\AA,\T)^\wedge,$$ whose analytic formula is given by
$$\omega=2\pi i\,d\tau\otimes\AA\frac{\partial}{\partial \T}+\psi+\nu,$$
with
$$\psi=\sum\limits_{m\ge 1}\Big(\frac{G_{2m+2}(\tau)}{(2m)!}2\pi i\,d\tau\otimes\sum\limits_{\substack{j+k=2m+1\\ j,k>0}}(-1)^j[\ad_{\T}^j(\AA),\ad_{\T}^k(\AA)]\frac{\partial}{\partial \AA}\Big),$$
and
$$\nu=\nu_1+\nu_2=\T F^{\zag}(2\pi i\xi,\T,\tau)\cdot \AA\otimes 2\pi i\,d\xi+(\frac{1}{\T}+\T\frac{\partial F^{\zag}}{\partial \T}(2\pi i\xi,\T,\tau))\cdot\AA\otimes 2\pi i\,d\tau.$$
Here, we view $\LL_{\C}(\AA,\T)^\wedge$ as a Lie subalgebra of $\Der\,\LL_{\C}(\AA,\T)^\wedge$ via the adjoint action, and $\T^n$ denotes the $n$-time adjoint action $\ad_\T^n$ on $\LL_{\C}(\AA,\T)^\wedge$; since every derivation $\delta\in\Der\,\LL_{\C}(\AA,\T)^\wedge$ is determined by its values on $\AA$ and $\T$, it can be written uniquely in the form
$$
\delta=\delta(\AA)\frac{\partial}{\partial\AA}+\delta(\T)\frac{\partial}{\partial\T}.
$$

\part{KZB Connection on a Single Elliptic Curve}

In this part, we describe an algebraic de Rham structure $\Pbar_\DR$ on the restriction of the canonical bundle $\Pbar$ to a single elliptic curve $E$. We essentially reproduce and then complete the unfinished work of Levin--Racinet \cite[\S 5]{levin-racinet}. In particular, their connection, though being algebraic, has an irregular singularity at the identity of the elliptic curve. Moreover, their formula is not explicit.

We compute explicitly the restriction of the universal elliptic KZB connection to a single elliptic curve in terms of its algebraic coordinates. We resolve the issue of irregular singularities at the identity in the connection formula by trivializing the bundle $\Pbar_\DR$ on two open subsets of $E$, one of which contains a neighborhood of the identity where the connection has a regular singularity. Therefore, we have constructed a de Rham structure $\Pbar_\DR$ on $\Pbar$ over $E$. Trivializing it on different open subsets is necessary because Deligne's canonical extension $\Pbar$ of $\cP$ from $E'$ (elliptic curve $E$ punctured at the identity) to $E$, unlike the genus zero case (of $\mathbb P^1$), is not trivial as an algebraic vector bundle. 

\section{Elliptic Curves as Algebraic Curves}\label{alg}
Fix $\tau\in\h$ and an elliptic curve $E=E_\tau$. Using the Weierstrass $\wp$-function
$$\wp_{\tau}(\xi):=E_2(\xi,\tau)-e_2(\tau),$$ we can embed a punctured elliptic curve $E'$ into $\mathbb{P}^2$ as follows:
$$\xi\mapsto[(2\pi i)^{-2}\wp_{\tau}(\xi),(2\pi i)^{-3}\wp'_{\tau}(\xi),1].\footnote{We choose this embedding so that powers of $2\pi i$ will not appear in our algebra formulas of KZB connections later.}$$
This satisfies an affine equation
$$y^2=4x^3-ux-v,$$
where $u=g_2(\tau)=20G_4(\tau), v=g_3(\tau)=\frac73 G_6(\tau).$
It is defined over $\K:=\Q(u,v)$. The identity of $E$ is at the infinity. The equation $y=0$ picks out three nontrivial order 2 elements in $E$ (the trivial one being the identity), we define $$E'':=E-\{y=0\}.$$ Note that $\id\in E''$, and $\{E', E''\}$ form an open cover of $E$.

By pulling back through the above embedding, one can identify algebraic functions and forms with their analytic counterparts, which is how we will turn the analytic formula of the connection into an algebraic formula. For example, coordinate functions $x,-\frac y2$ pull back to $P_2, P_3$ defined in Section \ref{eisell}, and the differential $\frac{dx}{y}$ pulls back to $2\pi i\, d\xi$. Note from Remark \ref{recur} that for $k\ge 2$, $P_k$ can be expressed by a polynomial of $P_2,P_3$, i.e. $P_k=P_k(x,y)\in\K[x,y]$.

\section{Algebraic Connection Formula over $E'$}\label{algconn}

Fix $\tau\in\h$, an elliptic curve $E=E_\tau$ defined over a field $\K$ of characteristic zero, and its algebraic embedding as in the last section. The elliptic KZB connection restricted from the universal one to the once punctured elliptic curve $E'=E-\{\id\}$ is
\begin{equation}\label{nu1}
\nabla^\an=d+\nu_1=d+\T F^{\zag}(2\pi i\xi,\T,\tau)\cdot \AA\otimes 2\pi i\,d\xi.
\end{equation}

Note that when described in terms of sections $\AA$ and $\T$, the bundle $\cP$ has factors of automorphy \cite[\S 6]{kzb}. We would like to make sections of $\cP$ elliptic (i.e. periodic with respect to the lattice $\Lambda_\tau$) by a gauge transformation so that the factor of automorphy is absorbed into it and become trivial. The connection form would also be elliptic and can be expressed in terms of algebraic coordinate functions $x$, $y$, and $P_k$'s. Following Levin--Racinet \cite[\S 5]{levin-racinet} and using Formula \ref{f1}, the connection tranforms under the gauge $g_{\alg}(\xi)=\exp(-\frac{1}{2\pi i}E_1\T)$ into $\nabla=d+\nu_1^{\alg}$, with 1-form
\begin{align*}
\nu_1^{\alg}&=-dg_{\alg}\cdot g_{\alg}^{-1}+g_{\alg}\,\nu_1\,g_{\alg}^{-1}
\\&=-\frac{1}{2\pi i}E_2\T\,d\xi+\exp\Big(-\frac{E_1}{2\pi i}\T\Big)\exp\Big(-\sum\limits_{k=1}^{\infty}\frac{(-\T)^k}{k}P_k(\xi,\tau)\Big)\cdot\AA\otimes 2\pi i\,d\xi
\\&=-(2\pi i)^{-2}(E_2-e_2)\T\otimes2\pi id\xi+\exp\Big(-\sum\limits_{k=2}^{\infty}\frac{(-\T)^k}{k}P_k(\xi,\tau)\Big)\cdot(\AA-(2\pi i)^{-2}e_2\T)\otimes 2\pi i\,d\xi
\\&=-\frac{xdx}{y}\T+\exp\Big(-\sum\limits_{k=2}^{\infty}\frac{(-\T)^k}{k}P_k(x,y)\Big)\cdot\Ss\otimes 2\pi i\,d\xi
\\&=-\frac{xdx}{y}\T+\frac{dx}{y}\Ss+\sum_{n=2}^{\infty}q_n(x,y)\frac{dx}{y}\T^n\cdot \Ss.
\end{align*}
Here $\nu_1^{\alg}\in\Omega^1(E'_{/\K})\otimes\LL_\K(\Ss,\T)^\wedge$, $\T^n$ is the action $\ad_\T^n$ on $\LL_\K(\Ss,\T)^\wedge$ that repeats the adjoint action of $\T$ for $n$ times, and $$q_n(x,y)=\sum_{2a_2+3a_3+\cdots+na_n=n}\frac{1}{a_2!a_3!\cdots a_n!}\prod_{k=2}^n\left(\frac{(-1)^{k+1}P_k(x,y)}{k}\right)^{a_k}\in\OO(E'_{/\K}),$$
where $\OO(E'_{/\K})=\K[x,y]/(y^2-4x^3+ux+v).$ Note that the above sum is indexed by the partitions of integer $n$ with summands at least 2. For example, $5$ has 2 such partitions: $5$ and $2+3$, so $q_5=\frac{P_5}{5}+(-\frac{P_2}{2})\cdot\frac{P_3}{3}=\frac15P_5-\frac16P_2P_3$. Written as polynomials in $\K[x,y]$, we have $q_2=-\frac{P_2}{2}=-\frac12x$, $q_3=\frac{P_3}{3}=-\frac16y$, and $q_4=(-\frac{P_4}{4})+\frac12(-\frac{P_2}{2})^2=\frac{u}{40}-\frac{x^2}{8}$.
\begin{remark}
One can use the recurrence relations of $P_k$'s described in Section \ref{eisell} to find relations among $q_n$'s.
\end{remark}

Note that the form $\nu_1^{\alg}$ is defined over $\K$, so we have constructed an algebraic vector bundle $(\cP_\DR,\nabla)$ over $E'$ whose fibers can be identified with $\LL_\K(\Ss,\T)^\wedge$. This algebraic bundle is defined over $\K$, with its connection $\nabla$ also defined over $\K$. It provides us with a $\K$-structure $\cP_\DR$ on $\cP$ over $E'$. Since the form $\nu_1^{\alg}$ has irregular singularity ($\frac{xdx}{y}$ having a double pole) at the identity, we cannot extend it naively across the identity to obtain Deligne's canonical extension. To construct the canonical extension, we have to change gauge on a Zariski open neighborhood $E''$ of the identity. We do this in Section \ref{regconn}.

\subsection{The naive connection vs. the elliptic KZB connection}\label{naive}
Before we do this, we consider the naive connection on the trivial bundle $$\LL_\K(\Ss,\T)^\wedge\times E'\to E'$$ which is defined by $\nabla'=d+\nu_1^{\naive}$, where $$\nu_1^\naive = -\frac{xdx}{y}\T+\frac{dx}{y}\Ss.\footnote{The ``$-$"-sign appears as we regard $-\T$ and $\Ss$ as a basis for $H_1(E)$, dual to $\Ss=xdx/y$ and $\T=dx/y$ in $H^1(E)$, see Remark \ref{stdual} before Section \ref{Hddq}.}$$
This flat connection is defined over $\K$, whose monodromy is a homomorphism $\rho^\naive:\pi_1(E',x)\to\LL(\Ss,\T)^\wedge$ that induces an isomorphism
$$I^\naive:\Lie\,\pi_1^\un(E',x)\to\LL(\Ss,\T)^\wedge.$$
 Since the elliptic KZB connection $\nabla=d+\nu_1^{\alg}$ agrees with the naive connection up to degree 2, its monodromy $\rho^\KZB:\pi_1(E',x)\to\LL(\Ss,\T)^\wedge$ also induces an isomorphism 
$$I^\KZB:\Lie\,\pi_1^\un(E',x)\to\LL(\Ss,\T)^\wedge.$$ 
Although both $I^\naive$ and $I^\KZB$ are isomorphisms of prounipotent groups, they can not be algebraically transfered from one to the other while preserving the group structure on $\LL(\Ss,\T)^\wedge$.

\begin{proposition}\label{Kstr}
Fix a field $\K$ of characteristic zero. Between the elliptic KZB connection $\nabla=d+\nu_1^{\alg}$ and the naive connection $\nabla'=d+\nu_1^{\naive}$, there are no algebraic gauge transformation over any Zariski open subset of $E_{/\K}$ that preserves the group structure on the fiber $\LL(\Ss,\T)^\wedge$.
\end{proposition}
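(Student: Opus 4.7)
Argue by contradiction. Suppose a non-empty Zariski open $U \subset E'$ and an algebraic gauge $g : U \to \Aut(\LL_\K(\Ss,\T)^\wedge)$ exist, satisfying $\nu_1^{\alg} = \Ad(g)\nu_1^{\naive} - dg\cdot g^{-1}$. Writing $g = \exp\delta$ and decomposing $\delta = \sum_{k\ge 1}\delta^{(k)}$ by the weight grading of $\Der(\LL_\K(\Ss,\T)^\wedge)$ (where $\delta^{(k)}$ raises Lie-algebra weight by $k$), I would expand the gauge equation degree by degree. The starting observation is that the discrepancy
$$\nu_1^{\alg} - \nu_1^{\naive} = \sum_{n\ge 2} q_n(x,y)\,\frac{dx}{y}\cdot\ad_{\ad_\T^n(\Ss)}$$
is an \emph{inner} $\Der(\LL)$-valued $1$-form. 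Projecting the gauge equation to outer derivations shows that the outer part $\bar\delta$ is $d$-closed, hence locally constant; after absorbing this constant outer automorphism (and verifying separately that no such constant $\mathrm{Out}_\K$-element sends $\nu^{\naive}$ into the inner-gauge orbit of $\nu^{\alg}$, by comparing graded pieces), we may reduce to finding an inner gauge $g = \Ad(e^\phi)$ with $\phi \in \OO(U)\,\hat\otimes\,\LL_\K(\Ss,\T)^\wedge$.

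\textbf{Inner equation and cohomological obstruction.} Setting $\eta := -\tfrac{xdx}{y}\,\T + \tfrac{dx}{y}\,\Ss$ and $Z := \sum_{n\ge 2}q_n\,\tfrac{dx}{y}\,\ad_\T^n(\Ss)$, the inner gauge equation for $\phi$ becomes the Maurer--Cartan-type identity
$$Z = (e^{\ad\phi}-1)(\eta) - \frac{e^{\ad\phi}-1}{\ad\phi}(d\phi).$$
Expanding $\phi = \sum_{k\ge 2}\phi^{(k)}$ in weights and extracting the weight-$k$ component gives a recursion
$$d\phi^{(k)} = [\phi^{(k-1)},\eta] - Z^{(k)} + R_k\!\left(\phi^{(2)},\dots,\phi^{(k-2)}\right),$$
with $R_k$ a Lie polynomial in the lower $\phi^{(j)}$'s. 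Solvability at weight $k$ amounts to the existence of $\phi^{(k)}\in\OO(U)\otimes\LL_k$ whose differential matches the right side. Projecting to the cokernel of $\ad_\eta : \LL_{k-1} \to \LL_k$ reduces this to the demand that a specific algebraic $1$-form $\Omega_k \in \Omega^1(U)\otimes\mathrm{coker}(\ad_\eta)$ be exact, i.e.~that a corresponding class in $H^1_{\DR}(U)$ vanish. By Remark~\ref{age} such a choice of $\phi^{(j)}$'s can be arranged through weight~$5$.

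\textbf{Non-vanishing at some higher weight.} The conclusion is to show that for some $k \ge 6$ the class $[\Omega_k]$ is non-zero independently of the choice of lower $\phi^{(j)}$'s. One would expand $q_n(x,y)$ in $\K[x,y]$ via the $P_n$-recurrence of Section~\ref{eisell}, and reduce algebraic $1$-forms on $E'$ modulo exact forms using $y^2 = 4x^3-ux-v$ and $dy = (6x^2 - u/2)\,dx/y$; the residual class is then a nontrivial combination of $\Ss$ and $\T$ in $H^1_{\DR}(E')$ whose coefficients are polynomials in the modular forms $G_4, G_6$, and hence non-vanishing. Since the restriction $H^1_{\DR}(E')\hookrightarrow H^1_{\DR}(U)$ is injective for any Zariski open $U\subset E'$, the class remains non-zero on $U$, contradicting the exactness required. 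The main obstacle is exactly this last step: one must control the image of the map $(\phi^{(2)},\dots,\phi^{(k-2)})\mapsto R_k \bmod \im(\ad_\eta)$ inside $H^1_{\DR}(U)\otimes\mathrm{coker}(\ad_\eta)$ and check that it misses the leading $q_{k-1}$-contribution; this amounts to a concrete, finite bookkeeping exercise in the free Lie algebra on $\{\Ss,\T\}$ combined with the $P_n$-relations, and is what makes the proof non-trivial.
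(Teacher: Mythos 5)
Your plan has a fatal misreading of Remark~\ref{age} that sends the whole argument off course. That remark concerns gauge transformations taking values in the full automorphism group $\Aut\,\LL_\K(\Ss,\T)^\wedge$ (i.e.~\emph{not} required to preserve the group structure); the explicit $g$ displayed there is not inner, since for example it fixes $\Ss$ through degree $4$ while sending $\T \mapsto \T + \frac12[\T,[\T,\Ss]] + \cdots$, which no single $\Ad(e^\phi)$ can do. So you cannot conclude that an \emph{inner} gauge $\Ad(e^\phi)$ can be arranged through weight~5, and in fact it cannot: the paper's proof derives a contradiction already at degree~3, which your scheme would only look for at weight~$\ge 6$.

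Concretely, the paper writes $g$ as a degree-filtered power series in the group-like elements of $\K(E)\ll\Ss,\T\rr$ and plugs into $dg = g\nu_1^{\naive} - \nu_1^{\alg}g$. Passing to de Rham cohomology classes and using that $[\tfrac{dx}{y}]$ and $[\tfrac{xdx}{y}]$ remain independent in $H^1_\DR(U)$ for every non-empty Zariski open $U\subset E$, the degree-$2$ coefficient equations force the degree-$1$ coefficients to vanish and the degree-$2$ coefficients to be constants; then the three degree-$3$ equations simultaneously force one of those constants to equal both $-\tfrac12$ and $0$. Your Maurer--Cartan recursion is the right general framework, and your observation that the cohomological obstruction should be detected in $H^1_\DR(U)\otimes\mathrm{coker}(\ad_\eta)$ is sound; but you should be running it at weight~$3$, where it already yields a contradiction, rather than postponing to $k\ge 6$ based on the inapplicable Remark~\ref{age}. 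As written the proposal also stops short of actually exhibiting the non-vanishing class, leaving the essential computation undone.
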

\begin{proof}
Suppose there were such a change of gauge $g$ that preserves the group structure, its value would lie in the subgroup $\exp\LL(\Ss,\T)^\wedge$ of $\Aut\LL(\Ss,\T)^\wedge$, acting on the fiber $\LL(\Ss,\T)^\wedge$ by conjugation. In other words, we would have $g: E\DashedArrow[->,densely dashed] \exp\LL(\Ss,\T)^\wedge$, with coefficients in $\K(E)$, field of fractions of $\OO(E'_{/\K})$, such that
\begin{equation*}
\nu_1^{\alg}=-dg\cdot g^{-1}+g\nu_1^{\naive}g^{-1},
\end{equation*}
or equivalently
\begin{equation}\label{gauge}
dg=g\nu_1^{\naive}-\nu_1^{\alg}g.
\end{equation}
This is an equation of 1-forms on $E$ with values in $\Der\,\LL(\Ss,\T)^\wedge$. Now let
\begin{align*}
g=1+\alpha\T+&\beta\Ss+\gamma\T^2+\lambda\Ss\T+\mu\T\Ss+\delta\Ss^2
\\&+\sigma\T^3+\zeta\T^2\Ss+\eta\T\Ss\T+\xi\T\Ss^2+\tau\Ss\T^2+\kappa\Ss\T\Ss+\epsilon\Ss^2\T+\iota\Ss^3+\cdots
\end{align*}
where coefficients $\alpha,\beta,\gamma,\cdots\in\K(E)$ should be regarded as rational functions on the elliptic curve $E_{/\K}$.
We substitute $g$ into the above equation (\ref{gauge}). Now we equate the coefficients up to the third degree of derivations in $\Der\,\LL(\Ss,\T)^\wedge$. We have
\begin{align}
&d\alpha=0,\quad d\beta=0,\quad d\gamma=0,\quad d\delta=0,\quad d\sigma=0\label{S} \\
&d\mu=\alpha\,\frac{dx}{y}+\beta\,\frac{x\,dx}{y}\label{TS}\\
&d\lambda=-\beta\,\frac{x\,dx}{y}-\alpha\,\frac{dx}{y}\label{ST}\\
&d\zeta=\gamma\,\frac{dx}{y}+\mu\,\frac{x\,dx}{y}+\frac{1}{2}\frac{x\,dx}{y}\label{TTS}\\
&d\eta=-\mu\,\frac{x\,dx}{y}+\lambda\,\frac{x\,dx}{y}\label{TST}\\
&d\xi=\mu\,\frac{dx}{y}+\delta\frac{x\,dx}{y}\label{TSS}\\
&\cdots
\end{align}
From (\ref{S}), we know that $\alpha$ and $\beta$ are constants. Taking cohomology classes on both sides of (\ref{TS}), we have $\alpha\,[\frac{dx}{y}]+\beta\,[\frac{x\,dx}{y}]=0$, and easily get $\alpha=\beta=0$. Thus $d\mu=0$, and $\mu$ is a constant. For the same reason, $\lambda$ is a constant.

Now taking cohomology classes on both sides of (\ref{TTS}) and of (\ref{TST}), we get $\gamma=0$, and $\lambda=\mu=-\frac12$. Similarly, taking cohomology classes on both sides of (\ref{TSS}), we get $\mu=\delta=0$. However, $\mu$ cannot be $-\frac12$ and $0$ at the same time!
\end{proof}

\begin{remark}\label{age}
If we forget about the group structure on the fiber $\LL(\Ss,\T)^\wedge$, we expect that there is an algebraic gauge transformation, defined over $\K$ and meromorphic at the identity when working over $\C$, between the elliptic KZB connection and the naive connection. The reason is that one expects to be able to solve the above equation (\ref{gauge}), if the gauge transformation $g$ is allowed to take value in $\Aut\LL(\Ss,\T)^\wedge$. This would indicate that periods of (regularized) iterated integrals constructed from both connections are the same. For some evidence of this, up to degree 5, one can take $g$ to be 
\begin{align*}
g: \Ss &\mapsto \Ss-\frac{29}{960}u[\T,[\T,[\T,[\T,\Ss]]]]-\frac16 x[\Ss,[\T,[\T,[\T,\Ss]]]] + \cdots\\
\T &\mapsto \T+\frac12[\T,[\T,\Ss]]-\frac16 x[\T,[\T,[\T,[\T,\Ss]]]]-\frac16[\Ss,[\T,[\T,[\T,\Ss]]]] + \cdots
\end{align*}
The form of this seems to suggest that there are (cohomological) obstructions in degrees $3,5,\cdots$ to gauge transform between the elliptic KZB connection and the naive connection while preserving the group structure on $\LL(\Ss,\T)^\wedge$. %
\end{remark}

\section{Algebraic Connection Formula over $E''$}\label{regconn}
Recall that we have an analytic local system $(\cP,\nabla^\an)$ of (Lie algebras of) unipotent fundamental groups over $E'$. The elliptic KZB connection $\nabla^\an$ is obtained by restricting the universal elliptic KZB connection to $E'$. Note that the analytic formula of the elliptic KZB connection $\nabla^\an$ has regular singularity at the identity with pronilpotent residue. The elliptic KZB connection thus extends naturally from $E'$ to $E$, and we obtain Deligne's canonical extension $(\Pbar,\nabla^\an)$ of $(\cP,\nabla^\an)$. It is not immediately clear that $(\Pbar,\nabla^\an)$ has an algebraic de Rham structure. The question is to determine whether this canonical extension is defined over $\K$, the field of definition of $E$. In this section, we show that the elliptic KZB connection $\nabla^\an$ is gauge equivalent to its algebraic counterpart $\nabla$ defined over $\K$, which has regular singularity at the identity with pronilpotent residue. It follows that Deligne's canonical extension $\Pbar$ of $\cP$ to $E$ is defined over $\K$.

We start with the algebraic connection $\nabla=d+\nu_1^\alg$, which is defined to be gauge equivalent to the analytic one $\nabla^\an$ in the last section. Since $\nu_1^\alg$ has irregular singularities at the identity of $E$,  we would like to apply another gauge transformation to make it regular. The reason $\nu_1^\alg$ has irregular singularity is that we introduced a gauge transformation involving $E_1$, which has a pole at the identity. To cancel this effect and make the connection regular at the identity, we apply a gauge transformation $g_{\reg}=\exp(-\frac{2x^2}{y}\T)$. Then the connection becomes $\nabla=d+\nu_1^{\reg}$, with 1-form
\begin{align*}
\nu_1^{\reg}&=-dg_{\reg}\cdot g_{\reg}^{-1}+g_{\reg}\,\nu_1^{\alg}\,g_{\reg}^{-1}
\\&=\left(d\left(\frac{2x^2}{y}\right)-\frac{xdx}{y}\right)\T+\exp\Big(-\frac{2x^2}{y}\T-\sum\limits_{k=2}^{\infty}\frac{(-\T)^k}{k}P_k(x,y)\Big)\cdot\Ss\otimes 2\pi i\,d\xi
\\&=\left(d\left(\frac{2x^2}{y}\right)-\frac{xdx}{y}\right)\T+\frac{dx}{y}\Ss+\sum_{n=1}^{\infty}r_n(x,y)\frac{dx}{y}\T^n\cdot \Ss.
\end{align*}
Here $\nu_1^{\reg}\in\Omega^1(E''\log\{\id\})\otimes\LL_\K(\Ss,\T)^\wedge$ and we get, by direct calculation, rational functions $$r_n(x,y)=\sum_{a_1+2a_2+3a_3+\cdots+na_n=n}\frac{1}{a_1!a_2!a_3!\cdots a_n!}\prod_{k=1}^n\left(\frac{(-1)^{k+1}P_k(x,y)}{k}\right)^{a_k}\in\OO(E''-\{\id\}),$$
where $P_1(x,y):=-\frac{2x^2}{y}$ and $\OO(E''-\{\id\})=\OO(E'_y)=\OO(E')[y^{-1}]$. Note that the sum for $r_n$ is indexed by the partitions of integer $n$ with no restrictions of the summands. For example, $4$ has 5 partitions: $4$, $3+1$, $2+2$, $2+1+1$, $1+1+1+1$, so 
\begin{align*}
r_4&=(-\frac{P_4}{4})+\frac{1}{1!1!}(\frac{P_3}{3})\cdot(\frac{P_1}{1})+\frac{1}{2!}(-\frac{P_2}{2})^2+\frac{1}{2!1!}(\frac{P_1}{1})^2\cdot(-\frac{P_2}{2})+\frac{1}{4!}(\frac{P_1}{1})^4\\
&=-\frac14P_4+\frac13P_3P_1+\frac18P_2^2-\frac14P_2P_1^2+\frac{1}{24}P_1^4
\end{align*}
\begin{remark}
One can use the recurrence relations of $P_k$'s described in Section \ref{eisell} to find relations among $r_n$'s.
\end{remark}

In the next section, we will check that
\begin{lemma}\label{reg}
The connection $\nabla=d+\nu_1^{\reg}$ has a regular singularity at the identity with pronilpotent residue. 
\end{lemma}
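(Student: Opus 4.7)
I prove the lemma by comparison with the analytic elliptic KZB connection $\nabla^{\an} = d + \nu_1$ of \S\ref{KZB}, whose restriction to $E'$ is already known to have a regular singularity with pronilpotent residue at the identity (see \cite{cee} and the proof in \cite{kzb}). Both properties are preserved under conjugation by a local section of $\Aut\,\LL_\K(\Ss,\T)^\wedge$ that is holomorphic and invertible at the identity. So it suffices to exhibit such a local section that gauges $\nu_1$ into $\nu_1^{\reg}$ in a neighborhood of the identity.

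\textbf{The composite gauge.} By construction, $\nu_1\to\nu_1^{\alg}$ is produced by $g_{\alg}=\exp\bigl(-\tfrac{1}{2\pi i}E_1\,\T\bigr)$ (\S\ref{algconn}), and $\nu_1^{\alg}\to\nu_1^{\reg}$ by $g_{\reg}=\exp\bigl(-\tfrac{2x^2}{y}\,\T\bigr)$. Since both are exponentials of scalar multiples of the single element $\T$, they commute, and the composite equals
\[
g_{\reg}\cdot g_{\alg}\;=\;\exp\!\left(-\Bigl(\tfrac{2x^2}{y}+\tfrac{E_1(\xi,\tau)}{2\pi i}\Bigr)\T\right).
\]
It therefore suffices to check that the scalar $f(\xi,\tau):=\tfrac{2x^2}{y}+\tfrac{E_1(\xi,\tau)}{2\pi i}$ is holomorphic at $\xi=0$.

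\textbf{The Laurent cancellation.} Near $\xi=0$ one has the standard expansions $E_1(\xi,\tau)=1/\xi+O(\xi)$, and from $x=(2\pi i)^{-2}(E_2-e_2)$, $y=-2(2\pi i)^{-3}E_3$, with $E_2=1/\xi^2+O(1)$ and $E_3=1/\xi^3+O(1)$, a direct computation gives
\[
\tfrac{2x^2}{y}\;=\;-\tfrac{1}{2\pi i\,\xi}+O(\xi).
\]
Hence the two simple poles cancel exactly and $f$ extends holomorphically across the identity. This is in fact the purpose of the precise form of $g_{\reg}$: the algebraic function $2x^2/y$ is chosen so as to cancel, at $\xi=0$, the pole of $E_1/(2\pi i)$ introduced by $g_{\alg}$.

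\textbf{Conclusion and main obstacle.} Because $g_{\reg}\cdot g_{\alg}=\exp(-f\,\T)$ with $f$ holomorphic near $\xi=0$, it is a holomorphic invertible automorphism of $\LL_\K(\Ss,\T)^\wedge$ in a neighborhood of the identity. Conjugation by it turns the regular singularity of $\nu_1$ into a regular singularity of $\nu_1^{\reg}$ and merely conjugates the residue by $\exp(-f(0,\tau)\T)$. Since the residue of $\nu_1$ at $\xi=0$ is $\ad[\AA,\T]=\ad[\Ss,\T]$ (using $\Ss=\AA-2G_2(\tau)\T$), which is inner and hence pronilpotent, so is the residue of $\nu_1^{\reg}$. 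The only substantive step is the Laurent-pole cancellation above; once that is verified the rest is formal from the gauge-equivalence framework, and no fresh estimates on the higher-degree terms $r_n(x,y)\,dx/y\,\T^n\!\cdot\Ss$ are required.
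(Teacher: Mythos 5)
Your proof is correct and takes a genuinely different route from the paper's. The paper proves the lemma by direct Laurent expansion: it writes
\[
1+\sum_{n\ge 1}r_n(x,y)\,\T^n=\exp\Bigl(-\sum_{k\ge 1}\tfrac{(-\T)^k}{k}P_k\Bigr)
\]
with $P_1=-2x^2/y$, computes the principal parts of all the $P_k$ at $\xi=0$, reorganizes the singular part as $\bigl(1+\T/(2\pi i\,\xi)\bigr)\exp(\text{holomorphic})$, and reads off the order of the pole and the residue. You instead invoke gauge invariance: since $\nu_1^{\reg}$ is obtained from $\nu_1$ by the composite gauge $g_{\reg}g_{\alg}=\exp(-f\,\T)$ with $f=\tfrac{2x^2}{y}+\tfrac{E_1}{2\pi i}$, and a single pole cancellation shows $f$ is holomorphic (indeed vanishes) at $\xi=0$, the regular singularity and pronilpotent residue of the analytic $\nu_1$ (established in \cite[\S 12--13]{kzb} and recalled in \S\ref{regconn} before the lemma) transfer formally to $\nu_1^{\reg}$. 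Your version is more economical---one pole cancellation instead of expanding the whole exponential---and makes transparent that $g_\reg$ was designed precisely to cancel the pole of $E_1$ introduced by $g_\alg$; the trade-off is that your argument is logically downstream of the known regularity of $\nu_1$, which the paper's self-contained expansion does not use. Two small imprecisions worth flagging: you write the residue of $\nu_1$ as $\ad_{[\AA,\T]}$, whereas expanding $\T F^{\zag}\cdot\AA$ gives $\ad_{[\T,\AA]}=\ad_{[\T,\Ss]}$, the opposite sign (this agrees with the paper's stated residue of $\nu_1^\reg$, and the sign is immaterial for pronilpotency); and since $f$ is odd in $\xi$ one has $f(0,\tau)=0$, so $g_{\reg}g_{\alg}$ is the identity at $\xi=0$ and the residue is not merely conjugate to, but equal to, that of $\nu_1$.
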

Therefore, this connection a priori living on $E''-\{\id\}$, can be extended naturally across the identity. It is an algebraic connection defined over $\K$ on an algebraic vector bundle $\Pbar_\DR|_{E''}$ over the open subset $E''$ of $E$. This is one part of a vector bundle $\Pbar_\DR$ over $E$. The other part $\Pbar_\DR|_{E'}=\cP_\DR$ was constructed using the connection $\nabla=d+\nu_1^\alg$ in the last section. Now we have trivialized $\Pbar_\DR$ on an open cover of two different subsets of $E$. By gluing two trivializations together in terms of the gauge transformation, we have constructed an algebraic vector bundle $\Pbar_\DR$ over $E$.

Summarizing results in this part, we get

\begin{theorem}[\bf The algebraic de Rham structure $\Pbar_\DR$ on $\Pbar$ over $E$] \label{adsse}
Suppose that $\K$ is a field of characteristic 0, embeddable in $\C$. Let $E$ be an elliptic curve defined over $\K$. Then for each embedding $\sigma:\K\xhookrightarrow{} \C$, we have an algebraic vector bundle $\Pbar_\DR$ over $E_{/\K}$ endowed with connection $\nabla$, and an isomorphism $$(\Pbar_\DR,\nabla)\otimes_{\K,\sigma}\C\approx(\Pbar,\nabla^\an).$$
The algebraic bundle $\Pbar_\DR$ and its connection $\nabla$ are both defined over $\K$. The $\K$-de Rham structure $(\Pbar_\DR,\nabla)$ on $(\Pbar,\nabla^\an)$ is explicitly given by the connection formulas for $\nu_1^\alg$ on $E'$ and $\nu_1^{\reg}$ on $E''$ above. In particular, the connection $\nabla$ has a regular singularity at the identity.
\end{theorem}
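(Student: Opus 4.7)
The plan is to assemble $\Pbar_\DR$ by gluing two algebraic trivializations on the open cover $\{E',E''\}$ of $E$: the form $\nu_1^\alg$ of Section~\ref{algconn} on $E'$, and the form $\nu_1^\reg$ of Section~\ref{regconn} on $E''$. Three things have to be checked: (a) each form is $\K$-algebraic on its open, (b) the two trivializations glue via an algebraic gauge transformation, and (c) the resulting algebraic bundle, when base-changed via $\sigma:\K\hookrightarrow\C$, recovers Deligne's canonical extension $(\Pbar,\nabla^\an)$. Crucially, Lemma~\ref{reg} then has to be verified to know that the formula $\nu_1^\reg$ really defines a connection on $E''$ (not only on $E''\setminus\{\id\}$) with at most regular singularity at the identity.

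For algebraicity over $\K$, the coefficients $q_n(x,y)$ appearing in $\nu_1^\alg$ are polynomial expressions in the $P_k(x,y)$, so Remark~\ref{recur} puts them in $\OO(E'_{/\K})$; hence $\nu_1^\alg\in\Omega^1(E'_{/\K})\otimes\LL_\K(\Ss,\T)^\wedge$. For $\nu_1^\reg$, the rational functions $r_n(x,y)$ a priori live only in $\OO(E')[y^{-1}]$, so the step I would carry out carefully is to verify that the gauge $g_\reg=\exp(-\tfrac{2x^2}{y}\T)$, chosen so that $P_1=-2x^2/y$ plays the role of the pole part of $E_1/2\pi i$ at the identity, produces cancellations making each $r_n$ regular at the locus $\{y=0\}$; the recurrence relations among the $P_k$ recorded in Section~\ref{eisell} should do this. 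On the intersection $E'\cap E''$, the gauge $g_\reg$ is itself $\K$-algebraic, so the identity $\nu_1^\reg=-dg_\reg\cdot g_\reg^{-1}+g_\reg\,\nu_1^\alg\,g_\reg^{-1}$ exhibits the compatibility needed to glue the two trivial bundles into a single algebraic vector bundle $\Pbar_\DR$ over $E_{/\K}$ with a flat connection $\nabla$ defined over $\K$.

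The main obstacle is Lemma~\ref{reg}: checking the singularity at the identity. I would work with a local uniformizer $t$ at $\id$ (e.g.\ $t=-2x/y$, which matches the analytic parameter $\xi$ to leading order), expand $x\sim ct^{-2}$, $y\sim c' t^{-3}$, $1/y\sim c''t^3$ in Laurent series, and compute that the ostensibly quadratic poles of $d(2x^2/y)$ and $x\,dx/y$ cancel to leave a simple pole, while each $r_n\,dx/y\cdot\T^n\Ss$ contributes at worst $\tfrac{dt}{t}$ times a combination of iterated brackets $\ad_\T^n(\Ss)$. This gives the residue $\mathrm{Res}_{\id}\nu_1^\reg$ as an element of $\LL_\K(\Ss,\T)^\wedge$ acting by adjoint action, hence pronilpotent, as required. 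Once Lemma~\ref{reg} is in hand, the comparison $(\Pbar_\DR,\nabla)\otimes_{\K,\sigma}\C\cong(\Pbar,\nabla^\an)$ follows: over $\C$, the analytic gauges $g_\alg$ and $g_\reg g_\alg$ explicitly identify $\nu_1^\alg$ and $\nu_1^\reg$ with (gauges of) the analytic KZB form $\nu_1$ from Section~\ref{KZB}, giving the isomorphism over the opens $E'$ and $E''$; since both sides are canonical extensions across the identity with regular singularity and nilpotent residue, the uniqueness of Deligne's canonical extension upgrades this to an isomorphism of bundles with connection on all of $E$.
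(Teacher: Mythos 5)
Your overall architecture matches the paper's exactly: trivialize $\Pbar_\DR$ on the Zariski cover $\{E',E''\}$ by the forms $\nu_1^\alg$ and $\nu_1^\reg$, glue via $g_\reg$, verify Lemma~\ref{reg}, and then appeal to uniqueness of Deligne's canonical extension to obtain the comparison isomorphism with $(\Pbar,\nabla^\an)$ after base change via $\sigma$. The differences are in technique for Lemma~\ref{reg} (you work with an algebraic uniformizer $t$ at the identity rather than the paper's direct expansion of the principal parts of the $P_k$ in the analytic variable $\xi$ near $0$); this is a valid and arguably cleaner way to see the pole cancellation, modulo the small bookkeeping that $-2x/y$ actually agrees with $2\pi i\,\xi$, not $\xi$, to leading order.

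However, one step you explicitly flag as needing care is actually stated incorrectly and, as written, would not go through: you claim the gauge $g_\reg$ ``produces cancellations making each $r_n$ regular at the locus $\{y=0\}$.'' This is false and cannot be fixed by the recurrence relations; already $r_1 = P_1 = -2x^2/y$ has a genuine pole along $\{y=0\}$, and nothing cancels it. The point is that regularity along $\{y=0\}$ is \emph{not required}: $E''$ is by definition $E - \{y=0\}$, so the observation $r_n\in\OO(E')[y^{-1}]=\OO(E''-\{\id\})$ is already exactly what one needs for $\nu_1^\reg$ to define a $\K$-rational $1$-form on $E''-\{\id\}$. The only nontrivial regularity check is at the identity $\{\id\}$, which you do correctly address in your final paragraph. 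If you delete the sentence about $\{y=0\}$ and replace it with the observation that $\OO(E')[y^{-1}]=\OO(E''\smallsetminus\{\id\})$, the rest of your argument is sound and reproduces the paper's proof.
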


\section{Regular Singularity and Residue at the Identity}
In this section, we prove Lemma \ref{reg} by showing that $\nu_1^{\reg}$ has regular singularity at the identity, and we compute its residue there. 

It is easy to check that analytically $d\left(\frac{2x^2}{y}\right)-\frac{xdx}{y}$ is holomorphic at the identity. So we are left to check that
\begin{equation} \label{r1}
1+\sum_{n=1}^{\infty}r_n(x,y)\T^n=\exp\Big(-\sum\limits_{k=1}^{\infty}\frac{(-\T)^k}{k}P_k(x,y)\Big)
\end{equation}
has a regular singularity at the identity.

Let $\xi$ be the complex coordinate near the identity. Analytically, we need to calculate (in terms of $\xi$) the principal parts of $P_1=-\frac{2x^2}{y}$ and $P_k$'s $(k\geq 2)$. The principal part of $P_1=-\frac{2x^2}{y}$ is $\frac{1}{2\pi i \xi}$; the principal part of $P_k$ $(k\geq 2)$ is $\frac{1}{(2\pi i \xi)^k}$, since 
\begin{align*}
P_k &=(2\pi i)^{-k}(E_k-e_k)
\\&=(2\pi i)^{-k}\sum_{m,n}(\xi+m\tau+n)^{-k}-{\sum_{m,n}}'(m\tau+n)^{-k}
\\&=\frac{1}{(2\pi i)^{k}\xi^k}+(2\pi i)^{-k}{\sum_{m,n}}'\left(\frac{1}{(\xi+m\tau+n)^k}-\frac{1}{(m\tau+n)^k}\right)
\\&=\frac{1}{(2\pi i)^{k}\xi^k}+(2\pi i)^{-k}{\sum_{m,n}}'\frac{1}{(m\tau+n)^k}\sum_{l=1}^{\infty}(-1)^l\binom{l+k-1}{k-1}\frac{\xi^l}{(m\tau+n)^l}
\\&=\frac{1}{(2\pi i \xi)^k}+\sum_{l=1}^{\infty}(-1)^l\binom{l+k-1}{k-1}(2\pi i)^{-(k+l)}e_{k+l}\,(2\pi i \xi)^l.
\end{align*}
Therefore, (\ref{r1}) is of the following form near $\xi=0$,
\begin{align*}
\exp\Big(-\sum_{k=1}^{\infty}\frac{(-\T/(2\pi i \xi))^k}{k}+\text{holo. in $\xi$}\Big)
&=\exp\left(\ln(1+\T/(2\pi i \xi))\right)\exp\Big(\sum_{n=0}^{\infty}a_n(\T)(2\pi i \xi)^n\Big)
\\&=(1+\T/(2\pi i \xi))\exp\Big(\sum_{n=1}^{\infty}a_n(\T)(2\pi i \xi)^n\Big),
\end{align*}
which has a regular singularity at the identity. Here $\forall n\geq 0, a_n(\T)\in\K[\T]$ and $a_0(\T)=0$.

Now it's easy to calculate the residue. Note that $\frac{2x^2}{y}$ is an odd function in $\xi$, and when expressed in terms of $\xi$, it has constant term $0$ in the holomorphic part; so does each of the $P_k$'s according to their expansions above. Therefore, we know that the holomorphic part in $\xi$ also has constant term $0$, and the residue at the identity we are looking for is then
$$\frac{\T}{2\pi i}\exp(0)\cdot \Ss(2\pi i)=\T\cdot \Ss=\ad_{[\T,\Ss]},$$
which is in $\Der\,\LL(\Ss,\T)^\wedge$. Note that $(2\pi i)$ at the end of the first expression above comes from $dx/y=2\pi i d\xi$.

\section{Tannaka Theory and a Universal Unipotent Connection over $E$}

Recall that a unipotent object in a tensor category $\Ca$ with the identity object $\bone_\Ca$ is an object $V$ with a filtration in $\Ca$
$$0=V_0\subseteq\cdots\subseteq V_n=V$$
such that each quotient $V_j/V_{j-1}$ is isomorphic to $\bone_\Ca^{\oplus k_j}$ for some $k_j\in\mathbb N$. 

Let $E$ be an elliptic curve defined over $\K$, and fix an embedding $\sigma:\K\xhookrightarrow{}\C$. Let $E'=E-\{\id\}$. Consider the following tensor categories: 
\begin{enumerate}
\item 
{\bf Unipotent Local Systems}
$$\Ca^\B_F:=\{\text{unipotent local systems $\V_F$ over $E'(\C)$}\},$$ 
where $F$ is a field of characteristic 0, and the identity object $\bone_{\Ca^\B_F}$ is the constant sheaf $F_{E'}$ on $E'(\C)$;
\item
{\bf Algebraic de Rham}
$$\Ca^\DR_\K:=\left\{
\begin{tabular}{c}
unipotent vector bundles $\Vbar$ over $E_{/\K}$ defined over $\K$\\
with a flat connection $\nabla$ that has regular singularity\\
at the identity with nilpotent residue
\end{tabular}\right\},$$
where the identiy object $\bone_{\Ca^\DR_\K}$ is the trivial vector bundle $\OO_{E}$ with the trivial connection given by the exterior differential $d$;
\item
{\bf Analytic de Rham}
$$
\Ca^\an:=\left\{
\begin{tabular}{c}
unipotent vector bundles $\Vbar^\an$ over $E^\an$ with a flat connection\\
that is holomorphic over $E'(\C)$, meromorphic over $E(\C)$ and\\
has regular singularity at the identity with nilpotent residue
\end{tabular}\right\},
$$
where $E^\an=E(\C)$ is the analytic variety associated to $E_{/\K}$, and the identiy object $\bone_{\Ca^\an}$ is the trivial vector bundle $\OO_{E^\an}$ with the trivial connection given by the exterior differential $d$.
\end{enumerate}

One can define fiber functors for these tensor categories so that they become neutral tannakian categories. Taking the fiber over $x\in E'(\C)$ of any object in $\Ca^\B_F$ provides a fiber functor $\omega_x$ of $\Ca^\B_F$. By Tannaka duality and the universal property of unipotent completion, the tannakian fundamental group of $\Ca^\B_F$ with respect to the fiber functor $\omega_x$, which we denote by $\pi_1(\Ca^\B_F,\omega_x)$, is the unipotent fundamental group $\pi_1^\un(E',x)_F$ over $F$. We will denote $\pi_1^\un(E',x)_\Q$ simply by $\pi_1^\un(E',x)$.

In the same way, one can define a fiber functor $\omega_x$ of $\Ca^\an$ for any $x\in E(\C)$, and a fiber functor $\omega_x$ of $\Ca^\DR_\K$ for any $x\in E(\K)$. Note that we can take $x$ to be the identity $\id\in E(\K)$. We denote their corresponding tannakian fundamental groups by $\pi_1(\Ca^\an,\omega_x)$ and $\pi_1(\Ca^\DR_\K,\omega_x)$ respectively. Our objective is to establish a natural comparison isomorphism between $\pi_1(\Ca^\an,\omega_x)$ and $\pi_1(\Ca^\DR_\K,\omega_x)\times_\K\C$ for any $x\in E(\K)$.

\subsection{Extension groups in $\Ca^\B_F$, $\Ca^\an$ and $\Ca^\DR_\K$}\label{exts}
We start with a general setting. Let $K$ be a field of characterisitic zero. Let $\Ca$ be a neutral tannakian category over $K$ with a fiber functor $\omega$ all of whose objects are unipotent. Denote its identity object by $\bone_\Ca$. The tannakian fundamental group of $\Ca$ with respect to $\omega$, which we denote by $\U$, is a prounipotent group defined over $K$. Denote its Lie algebra by $\u$, viewed as a topological Lie algebra. Since the category of $\U$-modules is equivalent to the category of continuous $\u$-modules, we have
$$H^m_\cts(\u)\cong H^m(\U)\cong \Ext^m_\Ca(\bone_\Ca,\bone_\Ca).$$ 
The following is standard.
\begin{proposition}
Let $\u$ be a pronilpotent Lie algebra, and denote its abelianization by $H_1(\u)$. Then
$$H_1(\u)\cong\Hom(H^1_\cts(\u),K).$$
If $H^2(\u)=0$, then $\u$ is a free Lie algebra.
\end{proposition}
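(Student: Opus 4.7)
The plan is to treat the two assertions separately, with both resting on the Chevalley--Eilenberg complex and, for the second, on the pronilpotent Nakayama lemma.

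For the first isomorphism, I would recall that the continuous cohomology $H^\ast_\cts(\u)$ is computed by the continuous Chevalley--Eilenberg complex, whose first two terms are $\u^\ast \xrightarrow{\delta} \Lambda^2 \u^\ast$, where $\delta$ is dual to the Lie bracket (continuous duals with respect to the pronilpotent filtration, i.e.\ direct limits of the finite-dimensional duals of the nilpotent quotients $\u/\u^{(n)}$). Hence $H^1_\cts(\u) = \ker \delta = (\u/[\u,\u])^\ast$. Taking the continuous dual and using that $\u/[\u,\u] = H_1(\u)$ is a profinite-dimensional $K$-vector space (each quotient $(\u/\u^{(n)})^{\ab}$ being finite dimensional) makes double duality an isomorphism, giving $H_1(\u) \cong \Hom(H^1_\cts(\u), K)$.

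For the second assertion, I would choose a (topological) basis of $\u^\ab = H_1(\u)$, lift it to a set $\{x_i\}\subset \u$, and let $\LL$ be the free pronilpotent Lie algebra on generators $X_i$. The map $\varphi : \LL \to \u$ sending $X_i \mapsto x_i$ is surjective on abelianizations by construction, hence surjective by the pronilpotent Nakayama lemma applied at each level of the lower central series. Let $\mathfrak{k} = \ker\varphi$. The five-term exact sequence (or Hochschild--Serre) associated to $0 \to \mathfrak{k} \to \LL \to \u \to 0$ reads
\begin{equation*}
H_2(\LL) \to H_2(\u) \to \mathfrak{k}/[\LL,\mathfrak{k}] \to H_1(\LL) \to H_1(\u) \to 0.
\end{equation*}
Since $\LL$ is free, $H_2(\LL) = 0$, and by the construction of $\varphi$, the map $H_1(\LL)\to H_1(\u)$ is an isomorphism. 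Thus $H_2(\u) \cong \mathfrak{k}/[\LL,\mathfrak{k}]$.

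Finally I would dualize: by the same Chevalley--Eilenberg/duality argument as in step one, the hypothesis $H^2_\cts(\u) = 0$ implies $H_2(\u) = 0$, so $\mathfrak{k}/[\LL,\mathfrak{k}] = 0$. Applying Nakayama once more to the pronilpotent $\LL$-module $\mathfrak{k}$ (i.e.\ working through the filtration $\mathfrak{k}\cap \LL^{(n)}$ and noting the associated graded vanishes) gives $\mathfrak{k}=0$, so $\varphi$ is an isomorphism and $\u$ is free. The main subtlety, and the step I would be most careful about, is the continuity/duality bookkeeping: ensuring throughout that $H^\ast_\cts$ and $H_\ast$ are taken in categories compatible with the pronilpotent topology so that $H^2_\cts(\u)=0$ genuinely forces $H_2(\u)=0$, and that Nakayama applies to the (possibly infinite rank) closed subalgebra $\mathfrak{k}\subset \LL$.
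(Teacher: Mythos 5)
The paper itself gives no proof of this proposition, remarking only that it is ``standard,'' so there is nothing to compare your argument against line by line. Your proof is correct and is the standard one: the first part follows from reading off $H^1_\cts(\u)=\ker\bigl(\u^\ast\to\Lambda^2\u^\ast\bigr)=(\u/[\u,\u])^\ast$ in the continuous Chevalley--Eilenberg complex and invoking reflexivity of pro-finite-dimensional vector spaces against their ind-finite-dimensional continuous duals; the second part is the usual presentation argument, choosing a free pronilpotent $\LL$ mapping onto $\u$ via a lift of a basis of $H_1(\u)$, using Nakayama for surjectivity, applying the five-term sequence in Lie algebra homology together with $H_2(\LL)=0$ to identify $H_2(\u)$ with $\mathfrak k/[\LL,\mathfrak k]$, and then applying Nakayama once more. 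You are right to flag the topological bookkeeping as the place where care is needed, in particular that $H_2(\u)$ should be understood as $\varprojlim H_2(\u/\u^{(m)})$ so that it is genuinely the continuous dual of $H^2_\cts(\u)=\varinjlim H^2(\u/\u^{(m)})$, and that the Nakayama step for $\mathfrak k$ runs through the closed filtration $\mathfrak k\cap\LL^{(n)}$; once these are set up consistently the argument closes. One small remark: the proposition's $H^2(\u)$ is stated without a subscript, but by the very duality you establish it does not matter whether one reads it as $H^2_\cts(\u)$ or as the profinite-dimensional homology $H_2(\u)$, since each vanishes exactly when the other does.
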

Therefore, if $\Ext^2_\Ca(\bone_\Ca,\bone_\Ca)=0$, then the Lie algebra $\u$ of the tannakian fundamental group of $\Ca$ is freely generated by $\Ext^1_\Ca(\bone_\Ca,\bone_\Ca)^*$, the $K$-dual of $\Ext^1_\Ca(\bone_\Ca,\bone_\Ca)$.

Now we compute extension groups in categories $\Ca^\B_F$, $\Ca^\an$ and $\Ca^\DR_\K$.
\begin{lemma}
$$\Ext^1_\Ca(\bone_\Ca,\bone_\Ca)\cong
\begin{cases}
H^1(E(\C); F) & \quad \text{when $\Ca=\Ca^\B_F$,} \\
H^1(E(\C); \C) & \quad \text{when $\Ca=\Ca^\an$,}\\
H^1_\DR(E_{/\K}) & \quad \text{when $\Ca=\Ca^\DR_\K$.}
\end{cases}$$
\end{lemma}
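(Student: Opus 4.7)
The plan is to identify $\Ext^1_\Ca(\bone_\Ca, \bone_\Ca)$ in each case with isomorphism classes of short exact sequences $0 \to \bone_\Ca \to V \to \bone_\Ca \to 0$ and then to compute directly.

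\textbf{Betti case.} Choose a frame of the middle term compatible with the filtration; the monodromy representation then takes values in the strictly upper-triangular unipotents in $\mathrm{GL}_2(F)$, an abelian subgroup canonically isomorphic to $(F,+)$. Equivalences of extensions act by conjugation with matrices $\bigl(\begin{smallmatrix} 1 & c \\ 0 & 1\end{smallmatrix}\bigr)$, which lie in this abelian subgroup and therefore act trivially. Hence
\[
\Ext^1_{\Ca^\B_F}(\bone, \bone) \;\cong\; \Hom(\pi_1(E'(\C), x), F) \;\cong\; H^1(E'(\C); F).
\]
A Mayer--Vietoris argument for the cover $\{E'(\C), D\}$ of $E(\C)$, with $D$ a disk around $\id$, identifies $H^1(E(\C); F) \cong H^1(E'(\C); F)$: the boundary loop at $\id$ is a commutator in $\pi_1(E'(\C))$ and hence trivial in $H_1$, which forces the restriction map to be an isomorphism.

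\textbf{Analytic case.} The Riemann--Hilbert correspondence combined with Deligne's canonical extension gives a tensor equivalence between $\Ca^\an$ and the category of unipotent local systems of $\C$-vector spaces on $E'(\C)$, because for unipotent objects the regular-singular-with-nilpotent-residue condition at $\id$ is automatic (the local monodromy is already unipotent). A tensor equivalence preserves $\Ext^1$ of the unit, so the Betti case yields $\Ext^1_{\Ca^\an}(\bone, \bone) = H^1(E(\C); \C)$.

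\textbf{Algebraic de Rham case.} I would construct a canonical $\K$-linear map
\[
\Ext^1_{\Ca^\DR_\K}(\bone, \bone) \;\longrightarrow\; H^1_\DR(E_{/\K})
\]
by trivializing an extension on the affine open $E'$ in a frame adapted to the filtration. The connection then takes the form $\nabla = d + \bigl(\begin{smallmatrix} 0 & b \\ 0 & 0\end{smallmatrix}\bigr)$ for some $b \in \Omega^1(E')$, and one sends the extension to the class of $b$ in $H^1_\DR(E'_{/\K}) \cong H^1_\DR(E_{/\K})$, where the last isomorphism is algebraic and follows from the Betti case via Grothendieck's comparison. Well-definedness amounts to checking that a change of the frame modifies $b$ by an exact form. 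To see that the map is an isomorphism, fix an embedding $\sigma: \K \hookrightarrow \C$, use GAGA (valid since $E$ is projective) together with the analytic case to identify both sides after $\otimes_{\K, \sigma} \C$ with $H^1(E(\C); \C)$, and descend by faithful flatness.

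\textbf{Main obstacle.} The delicate step is the de Rham case: constructing the $\K$-linear comparison map natively in the algebraic setting and checking its well-definedness (independence of the affine trivialization), so that the descent from $\C$ produces an isomorphism already at the level of $\K$-vector spaces. The other two cases are essentially soft once one has the Mayer--Vietoris input that kills the puncture in $H^1$.
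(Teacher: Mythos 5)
Your Betti and analytic cases are both correct, and the paper simply calls them ``well known,'' so you have filled in exactly the kind of standard argument (abelian unipotent monodromy, Mayer--Vietoris to kill the puncture, Riemann--Hilbert plus Deligne's canonical extension) that the paper leaves implicit. The interesting comparison is the de Rham case, where the paper explicitly offers two proofs and you take a route aligned with the first one while the paper fleshes out the second. The paper's detailed argument does not construct a map to $H^1_\DR(E')$; instead it works with the Hodge filtration of $H^1_\DR(E)$: it builds an injection $e\colon H^0(E,\Omega^1_E)\to\Ext^1$ by putting $\nabla=d+\bigl(\begin{smallmatrix}0&\omega\\0&0\end{smallmatrix}\bigr)$ on the trivial bundle, a map $f\colon\Ext^1\to H^1(E,\OO_E)$ by forgetting the connection, shows $\ker f=\im e$, and proves surjectivity of $f$ by exhibiting the leading $\T,\Ss$ terms of the KZB connection as a nontrivial class in $H^1(E,\OO_E)$. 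This hands-on proof avoids GAGA and descent entirely and, in keeping with the paper's explicit aims, exhibits a nontrivial extension using the very connection being constructed. Your proof, by contrast, trivializes the extension on the affine $E'$ (justified since $H^1(E',\OO_{E'})=0$), sends it to $[b]\in H^1_\DR(E'_{/\K})\cong H^1_\DR(E_{/\K})$, and then deduces bijectivity after $\otimes_{\K,\sigma}\C$ via GAGA, the analytic case, and faithful-flat descent; this is shorter and more conceptual, but buys less explicit information. Two small points to tighten: you should record why the base change $\Ext^1_{\Ca^\DR_\K}(\bone,\bone)\otimes_\K\C\cong\Ext^1_{\Ca^\DR_\C}(\bone,\bone)$ holds (compatibility of the unipotent tannakian fundamental group with extension of scalars, cf.~Deligne, which the paper invokes in a nearby remark), and you should make explicit that the restriction $H^1_\DR(E_{/\K})\to H^1_\DR(E'_{/\K})$ is an algebraic isomorphism (both sides are two-dimensional and the map is injective), since you use that identification before passing to $\C$.
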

\begin{proof}
The first two cases are well known. The third case can be easily obtained by tensoring $\Ca^\DR_\K$ with $\C$ via the fixed embedding $\sigma:\K\xhookrightarrow{}\C$ and invoking Grothendieck's algebraic de Rham theorem, which provides an isomorphism
$$H^1_\DR(E_{/\K})\otimes_\K \C\xrightarrow{\sim}H^1(E(\C);\C);$$
instead, we provide another proof, explicitly constructing extensions by using our algebraic connection formulas on $\Pbar_\DR$. Given a global 1-form $\omega$ on $E$, we can define a connection
$$\nabla=d+\begin{pmatrix} 0 & \omega \cr 0 & 0 \end{pmatrix},$$
on the trivial bundle $\OO_E\oplus\OO_E$. This defines an extension in $\Ext^1_{\Ca^\DR_\K}(\bone_{\Ca^\DR_\K},\bone_{\Ca^\DR_\K})$ and gives rise to a map 
$$e:H^0(E,\Omega^1_E)\to\Ext^1_{\Ca^\DR_\K}(\bone_{\Ca^\DR_\K},\bone_{\Ca^\DR_\K}),$$
which is injective. To see this, we first tensor with $\C$ on both sides of this map. One can then identify the complexified extension group on the right with $H^1(E;\C)$ by using monodromy. And the map becomes the inclusion of holomorphic 1-forms on $E$ into $H^1(E;\C)$, which is injective.

Suppose we have a vector bundle $(\cV,\nabla)\in\Ca^\DR_\K$, which is an extension of $(\OO_E,d)$ by $(\OO_E,d)$. By forgeting the connections on all these bundles, this extension determines a class in $\Ext^1_E(\OO_E,\OO_E)\cong H^1(E,\OO_E)$. This gives rise to a map $f$ and the following sequence
$$0\to H^0(E,\Omega^1_E)\xrightarrow{e} \Ext^1_{\Ca^\DR_\K}(\bone_{\Ca^\DR_\K},\bone_{\Ca^\DR_\K})\xrightarrow{f} H^1(E,\OO_E)\to 0.$$
The result follows if this is a short exact sequence.

Suppose a vector bundle $(\cV,\nabla)$ represents a class in $\ker f$, then we have a split extension (without connection)
$$0\to\OO_E\to\cV\to\OO_E\to 0.$$
Fixing a splitting on $\cV$, the connection can be written as 
$$\nabla=d+\begin{pmatrix} 0 & \omega \cr 0 & 0 \end{pmatrix},$$
where $\omega$ is a global 1-form on $E$. So we have
$$\ker f=\im e.$$

To show $f$ is surjective, we provide here explicitly a vector bundle with connection that corresponds to a nontrivial extension class in $H^1(E,\OO_E)$. Recall that the connection $\nabla$ on $\Pbar_\DR$ is given by algebraic connection formulas
$$\nabla=
\begin{cases}
d+\nu_1^\alg=d-\frac{xdx}{y}\T+\frac{dx}{y}\Ss+\cdots & \quad\text{on $E'$},\\
d+\nu_1^\reg=d+\left(d\left(\frac{2x^2}{y}\right)-\frac{xdx}{y}\right)\T+\frac{dx}{y}\Ss+\cdots & \quad\text{on $E''$}.
\end{cases}
$$
The leading terms recorded here provides a connection on the abelianization 
of $\Pbar_\DR$. This gives a nontrivial extension of $\OO_E$ by $\OO_E$, thus corresponds to a nontrivial class in $H^1(E,\OO_E)$.
\end{proof}

\subsection{The de Rham tannakian fundamental group $\pi_1(\Ca^\DR_\K,\omega_x)$}
It is well known that there is an equivalence of categories
$$\Ca^\B_\C\rightleftarrows\Ca^\an.$$
The right arrow is the functor that takes a unipotent local system $\V$ over $E'$ to Deligne's canonical extension $(\Vbar,\nabla)$ of $\V\otimes\OO^\an_{E'}$, where $$\nabla:\Vbar\rightarrow\Vbar\otimes\Omega^1_E(\log\{\id\}).$$ The left arrow is the functor obtained by taking locally flat sections of $\cV$ over $E'$. By this equivalence, we obtain an isomorphism between their tannakian fundamental groups
\begin{equation}\label{bfg}
\comp_{\an,\B}:\pi_1(\Ca^\an,\omega_x)\xrightarrow{\cong}\pi_1(\Ca^\B_\C,\omega_x)=\pi_1^\un(E',x)\times_\Q\C
\end{equation}
for each $x\in E'(\C)$. By Section \ref{unifree}, as an abstract group, the unipotent fundamental group $\pi_1^\un(E',x)_\C$ can be identified with its Lie algebra $\LL_\C(\AA,\T)^\wedge$, which is the same as $\LL_\C(\Ss,\T)^\wedge$, where $\AA$, $\Ss$ and $\T$ are the sections defined in \S\ref{Hddq}.

The local system $\cP$ over $E'$ is a pro-object in $\Ca^\B_\C$, which is equivalent to an action
\begin{equation}\label{baction}
\pi_1(\Ca^\B_\C,\omega_x)\to\Aut \LL_\C(\AA,\T)^\wedge
\end{equation}
of the tannakian fundamental group on the fiber of $\cP$ over $x$.
This corresponds to the adjoint action of $\LL_\C(\AA,\T)^\wedge$ on itself
\begin{equation}\label{adac}
\ad:\LL_\C(\AA,\T)^\wedge\to\Der\ \LL_\C(\AA,\T)^\wedge.
\end{equation}

There is another equivalence of categories
$$\Ca^\DR_\C\rightleftarrows\Ca^\an,$$
where the right arrow is the obvious one, and the left arrow exists by GAGA: since $E^\an=E(\C)$ is projective, the category of analytic sheaves over $E^\an$ is equivalent to its algebraic counterpart over $\C$. By this equivalence, we have an isomorphism of tannakian fundamental groups
$$\pi_1(\Ca^\an,\omega_x)\to\pi_1(\Ca^\DR_\C,\omega_x)$$
for any $x\in E(\C)$. 
Although it is well known that for each $x\in E(\K)$ one can get a canonical $\K$-structure $\pi_1(\Ca^\DR_\K,\omega_x)$ on $\pi_1(\Ca^\DR_\C,\omega_x)$, we provide an elaborate proof to set up the discussion of universal connection in the next subsection.
\begin{proposition}
There is a natural comparison isomorphism
$$\comp_{\an,\DR}:\pi_1(\Ca^\an,\omega_x)\to\pi_1(\Ca^\DR_\K,\omega_x)\times_\K\C$$
for any $x\in E(\K)$. 
\end{proposition}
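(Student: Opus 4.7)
The plan is to factor the comparison as
$$\pi_1(\Ca^\an, \omega_x) \xrightarrow{\sim} \pi_1(\Ca^\DR_\C, \omega_x) \xrightarrow{\sim} \pi_1(\Ca^\DR_\K, \omega_x) \times_\K \C.$$
The first arrow comes from the tensor equivalence $\Ca^\DR_\C \simeq \Ca^\an$ furnished by GAGA, since $E(\C)$ is projective, together with the fact that Deligne's canonical extension carries the analytic and algebraic pictures into each other. The second arrow is the nontrivial content: I must exhibit $\pi_1(\Ca^\DR_\K, \omega_x)$ as a $\K$-form of $\pi_1(\Ca^\DR_\C, \omega_x)$, which I would accomplish using the explicit universal object $\Pbar_\DR$ from Theorem \ref{adsse}.

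First I would show that $\pi_1(\Ca^\DR_\K, \omega_x)$ is a free prounipotent group of rank 2 over $\K$. From Section \ref{exts} we already have $\Ext^1_{\Ca^\DR_\K}(\bone, \bone) \cong H^1_\DR(E_{/\K})$, which is 2-dimensional. For the vanishing of $\Ext^2$, I would invoke the Betti side: $\pi_1^\un(E', x)$ is free on two generators, so the continuous $H^2$ of its Lie algebra vanishes; combining Grothendieck's algebraic de Rham comparison with GAGA gives $\Ext^2_{\Ca^\DR_\K}(\bone, \bone) \otimes_\K \C = 0$, hence $\Ext^2_{\Ca^\DR_\K}(\bone, \bone) = 0$. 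The general fact recalled in Section \ref{exts} then identifies the Lie algebra of $\pi_1(\Ca^\DR_\K, \omega_x)$ with the free pronilpotent Lie algebra over $\K$ on $H^1_\DR(E_{/\K})^\ast$.

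Next I would construct the comparison map using $\Pbar_\DR$ as a universal element. As a pro-object of $\Ca^\DR_\K$ with fiber $\LL_\K(\Ss,\T)^\wedge$ over $x$, Tannaka duality turns it into a representation $\pi_1(\Ca^\DR_\K, \omega_x) \to \Aut\LL_\K(\Ss,\T)^\wedge$. After base change to $\C$ and transport to $\Ca^\an$ via GAGA, this representation matches the adjoint action (\ref{adac}) under the Betti comparison (\ref{bfg}), because $\Pbar_\DR \otimes_\K \C$ realizes the analytic bundle $\Pbar$ whose Betti incarnation is $\cP$ by construction in Sections \ref{algconn} and \ref{regconn}. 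The inverse of $\comp_{\an,\DR}$ is then the induced map on tannakian fundamental groups.

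The main obstacle will be showing this map is actually an isomorphism rather than merely a morphism, and that it is canonical and natural in $x$. For the isomorphism property, I would use the freeness result from the first step: both sides are free prounipotent of rank 2 after base change, so it suffices to check the induced map on abelianizations. This reduces to the first-order calculation in the proof of the lemma in Section \ref{exts}: the degree-one parts of $\nu_1^\alg$ and $\nu_1^\reg$ realize precisely the Hodge filtration on $H^1_\DR(E_{/\K})$, pairing nontrivially with both $H^1(E, \OO_E)$ and $H^0(E, \Omega^1_E)$, so the abelianized comparison is exactly the Hodge-de Rham part of Grothendieck's theorem. Naturality in $x \in E(\K)$ would then follow because the construction of $\Pbar_\DR$ by gluing $\nu_1^\alg$ on $E'$ with $\nu_1^\reg$ on $E''$ via an explicit $\K$-algebraic gauge is manifestly functorial in the chosen $\K$-rational fiber, so different choices of $x$ are related by parallel transport inside a fixed $\K$-algebraic bundle.
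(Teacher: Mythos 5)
Your construction of the comparison map agrees with the paper's: you view $\Pbar_\DR$ as a pro-object of $\Ca^\DR_\K$, obtain via Tannaka duality the action $\pi_1(\Ca^\DR_\K,\omega_x)\to\Aut\LL_\K(\Ss,\T)^\wedge$, and note that after base change and GAGA this recovers the adjoint action (\ref{adac}) under the Betti comparison (\ref{bfg}). However, your route to showing the map is an isomorphism contains a circularity.

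You propose to first prove that $\pi_1(\Ca^\DR_\K,\omega_x)$ is free of rank 2 by establishing $\Ext^2_{\Ca^\DR_\K}(\bone,\bone)=0$, and to deduce this vanishing from the Betti side via the claim that $\Ext^2_{\Ca^\DR_\K}(\bone,\bone)\otimes_\K\C\cong\Ext^2_{\Ca^\DR_\C}(\bone,\bone)$. But that base-change compatibility of $\Ext^2$ is not established anywhere in the paper, and in fact it is essentially equivalent to the base-change compatibility of $\pi_1(\Ca^\DR)$ with scalar extension --- precisely what the proposition you are proving is about. The paper only proves base change for $\Ext^1$ (Section \ref{exts}, by exhibiting explicit extensions), not for $\Ext^2$. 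To make your step rigorous you would need to invoke Deligne's theorem that the unipotent tannakian fundamental groupoid is compatible with extension of scalars [Cor.~10.43 of \cite{deligne:p1}], but the paper deliberately reserves that for the remark following the corollary, precisely because it wants to give an independent argument. Alternatively you could invoke a bar-construction description of $H^\bullet_\cts(\u)$, which is not developed here.

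The paper avoids needing $\Ext^2 = 0$ (and hence freeness of the de Rham side) as input. Injectivity of $\comp_{\an,\DR}$ follows from the \emph{faithfulness} of the de Rham action on $\Pbar_\DR$: on Lie algebras, the composite $\LL_\C(\AA,\T)^\wedge\to\u(\Ca^\DR_\K,\omega_x)\otimes_\K\C\to\Der\,\LL_\C(\AA,\T)^\wedge$ is the adjoint action, which is injective because a free Lie algebra has trivial center, so the first map must be injective. Surjectivity then follows because $\u(\Ca^\DR_\K,\omega_x)$ is topologically generated by $H^1_\DR(E_{/\K})^\ast$ (a general fact about pronilpotent Lie algebras --- no freeness needed) and the map on abelianizations is an isomorphism, so the image contains a full set of topological generators. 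Freeness of $\pi_1(\Ca^\DR_\K,\omega_x)$ then falls out as Corollary \ref{drfgs}, as a consequence rather than a prerequisite. Your abelianization computation via the degree-one terms of $\nu_1^\alg$ and $\nu_1^\reg$ is correct and is indeed what is used for surjectivity, but you should swap the order of your argument: do not try to obtain freeness first.
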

\begin{proof}
This map $\comp_{\an,\DR}$ is induced from the functor of tensoring with $\C$ by using the fixed embedding $\sigma:\K\xhookrightarrow{}\C$:
$$\Ca^\DR_\K\otimes\C\to\Ca^\DR_\C\simeq\Ca^\an.$$
We study it by working with a special object. In Section \ref{regconn}, we constructed such an object: an algebraic vector bundle $(\Pbar_\DR,\nabla)$ over $E$ with a connection $\nabla$ defined over $\K$. It is a pro-object in $\Ca^\DR_\K$, and corresponds to an action
\begin{equation}\label{draction}
\pi_1(\Ca^\DR_\K,\omega_x)\to\Aut \LL_\K(\Ss,\T)^\wedge
\end{equation}
of the tannakian fundamental group on the fiber over $x$. Recall from Theorem \ref{adsse} that
$$(\Pbar_\DR,\nabla)\otimes_{\K}\C\approx(\Pbar,\nabla^\an),$$
where $\Pbar$ is Deligne's canonical extension of $\cP$ over $E'$ to $E$. Therefore, after tensoring with $\C$, we obtain the object $\cP$ in $\Ca^\an$, which, by (\ref{bfg}) and (\ref{baction}), is equivalent to an action
$$\pi_1(\Ca^\an,\omega_x)\to\Aut \LL_\C(\Ss,\T)^\wedge.$$
This action factors through the action given by $\text{(\ref{draction})}\times_\K\C$, that is, we have a diagram
$$
\begin{tikzcd}[column sep=huge]
\pi_1(\Ca^\an,\omega_x)\arrow{r}{\comp_{\an,\DR}}\arrow{rd} & \pi_1(\Ca^\DR_\K,\omega_x)\times_\K\C \arrow{d}\\ 
& \Aut\ \LL_\C(\Ss,\T)^\wedge
\end{tikzcd}\\
$$
Note that the functor that takes a unipotent group to its Lie algebra is an equivalence of categories between the category of unipotent $\K$-groups and the category of nilpotent Lie algebras over $\K$. The above diagram is thus equivalent to the following diagram
$$
\begin{tikzcd}
\LL_\C(\AA,\T)^\wedge\arrow[r]\arrow[rd,hook,"\ad"] & \u(\Ca^\DR_\K,\omega_x)\otimes_\K\C \arrow{d}\\ 
& \Der\ \LL_\C(\AA,\T)^\wedge
\end{tikzcd}\\
$$
where $\u(\Ca^\DR_\K,\omega_x)$ denotes the Lie algebra of $\pi_1(\Ca^\DR_\K,\omega_x)$. Since the adjoint action $\ad:\LL_\C(\AA,\T)^\wedge\to\Der\ \LL_\C(\AA,\T)^\wedge$ from (\ref{adac}) is injective, the top row of the previous diagram
$$\comp_{\an,\DR}:\pi_1(\Ca^\an,\omega_x)\to\pi_1(\Ca^\DR_\K,\omega_x)\times_\K\C$$
must also be injective. The surjectivity of this map follows from the fact that the Lie algebra $\u(\Ca^\DR_\K,\omega_x)$ is generated by the $\K$-dual $H^1_\DR(E_{/\K})^*$ of $H^1_\DR(E_{/\K})$, see discussion in Section \ref{exts}.
\end{proof}
\begin{remark}\label{ff}
One can choose $x$ to be a tangential base point. For example, one can take the fiber functor to be the fiber at the unit tangent vector $\vv$ at the identity of $E$. We will denote this fiber functor by $\omega_\vv$. For an admissible variation of Hodge structures, this amounts to taking the limit mixed Hodge structure associated to the tangent vector, see the natural definition given in \cite{drh}. See also \cite[\S 15.3 -- 15.12]{deligne:p1} for tangential base points, and \cite[\S 16]{kzb} for their relations to limit mixed Hodge structures. 
\end{remark}

\begin{corollary}\label{drfgs}
There is an isomorphism of groups over $\K$
$$\pi_1(\Ca^\DR_\K,\omega_x)\cong \exp\LL_\K(\Ss,\T)^\wedge$$
for any $x\in E(\K)$.
\end{corollary}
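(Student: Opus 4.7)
The plan is to invoke the general framework from Section~\ref{exts}, which reduces the problem to two computations: identifying $\Ext^1_{\Ca^\DR_\K}(\bone, \bone)^*$, and verifying the vanishing $\Ext^2_{\Ca^\DR_\K}(\bone, \bone) = 0$. Once both are in hand, the cited proposition gives a free pronilpotent Lie algebra structure on $\u := \Lie(\pi_1(\Ca^\DR_\K, \omega_x))$, and Remark~\ref{BCH} upgrades this to the desired group isomorphism.

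The $\Ext^1$ computation is already carried out in the lemma of Section~\ref{exts}: we have $\Ext^1_{\Ca^\DR_\K}(\bone, \bone) \cong H^1_\DR(E_{/\K})$, a two-dimensional $\K$-vector space with basis the classes of $\Ss = xdx/y$ and $\T = dx/y$. Via Poincar\'e duality (Remark~\ref{stdual}), its $\K$-dual is naturally identified, up to sign, with the $\K$-span of $\Ss$ and $\T$ in the first homology, so $\u$ will be topologically generated by two elements we may label $\Ss$ and $\T$.

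For the $\Ext^2$ vanishing, I would base change to $\C$. The chain of tensor equivalences $\Ca^\DR_\K \otimes_\K \C \simeq \Ca^\DR_\C \simeq \Ca^\an \simeq \Ca^\B_\C$ established above preserves Ext groups of the unit object, so
\begin{equation*}
\Ext^2_{\Ca^\DR_\K}(\bone, \bone) \otimes_\K \C \;\cong\; \Ext^2_{\Ca^\B_\C}(\bone, \bone) \;\cong\; H^2(E'(\C); \C) \;=\; 0,
\end{equation*}
the last equality because $E'(\C)$ is a non-compact Riemann surface. Faithful flatness of $\K \hookrightarrow \C$ then yields $\Ext^2_{\Ca^\DR_\K}(\bone, \bone) = 0$ itself.

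Combining these inputs, the proposition in Section~\ref{exts} delivers $\u \cong \LL_\K(\Ss, \T)^\wedge$ as topological $\K$-Lie algebras, and exponentiating via Baker-Campbell-Hausdorff produces the claimed isomorphism of groups over $\K$. The step I expect to require the most care is the $\Ext^2$ vanishing, together with the verification that the comparison functors preserve $\Ext^2$; a purely algebraic alternative, avoiding the Betti-de Rham comparison, is to compute these Ext groups as hypercohomology of the logarithmic de Rham complex $\Omega^\bullet_E(\log \{\id\})$, whose $H^2$ vanishes on the curve $E$ for dimension reasons.
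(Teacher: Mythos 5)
Your route is genuinely different from the paper's. The paper proves the preceding Proposition first — establishing the comparison isomorphism $\comp_{\an,\DR}:\pi_1(\Ca^\an,\omega_x)\xrightarrow{\sim}\pi_1(\Ca^\DR_\K,\omega_x)\times_\K\C$ by analyzing the concrete pro-object $\Pbar_\DR$ and its monodromy action on the fiber $\LL_\K(\Ss,\T)^\wedge$, showing injectivity via the adjoint-action diagram and surjectivity via generation of $\u(\Ca^\DR_\K,\omega_x)$ by $H^1_\DR(E_{/\K})^*$ — and then the Corollary follows by descending the known Betti-side identification $\pi_1(\Ca^\an,\omega_x)\cong\exp\LL_\C(\AA,\T)^\wedge$ along the $\K$-structure furnished by $\Pbar_\DR$. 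You instead skip the comparison map entirely and argue intrinsically inside $\Ca^\DR_\K$: identify $\Ext^1$, prove $\Ext^2=0$, and quote the freeness criterion from Section~\ref{exts}. This is cleaner if one only wants the abstract group structure, but it proves strictly less than the Proposition: it does not produce the comparison isomorphism itself, which the paper needs later (e.g., to pass periods and the Hodge/Betti information through the de Rham side). The paper's extra work buys that comparison.

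Two cautions on your argument. First, the equivalence $\Ca^\DR_\K\otimes_\K\C\simeq\Ca^\DR_\C$ is \emph{not} ``established above'' as you assert; in the paper it only appears afterward, in the Remark following the Corollary, as a citation of Deligne's compatibility of the unipotent tannakian fundamental groupoid with extension of scalars. If you rely on it for the $\Ext^2$ comparison you should cite it explicitly — and at that point one should notice that Deligne's result already gives $\pi_1(\Ca^\DR_\K,\omega_x)\times_\K\C\cong\pi_1(\Ca^\DR_\C,\omega_x)\cong\pi_1(\Ca^\B_\C,\omega_x)$ directly, making the $\Ext^2$ detour somewhat redundant. Second, your fallback computation is sound in substance but the stated reason is off: what kills $\mathbb H^2\bigl(E,\Omega^\bullet_E(\log\{\id\})\bigr)\cong H^2_\DR(E'_{/\K})$ is not the dimension of $E$ but the fact that $E'$ is \emph{affine} of dimension $1$, so its algebraic de Rham cohomology vanishes above degree $1$. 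One should also check that the Yoneda $\Ext^2$ in the full unipotent subcategory $\Ca^\DR_\K$ really is computed by this hypercohomology (for $\Ext^1$ this is automatic; for $\Ext^2$ it requires a small argument, essentially that the obstruction theory for unipotent extensions is controlled by the same complex), which you do flag as the delicate step.
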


\begin{remark}
One can establish the isomorphism in a different way. By Deligne \cite[Cor. 10.43]{deligne:p1}, in the unipotent case, the tannakian fundamental groupoid is compatible with extension of scalars. In particular, for our category $\Ca^\DR_\K$, given any $x\in E(\K)$, restricting its tannakian fundamental groupoid to a diagonal point $(x,x)$ gives a tannakian fundamental group defined over $\K$, which is also compatible with extension of scalars, i.e. $\pi_1(\Ca^\DR_{\K})\times_\K\C\cong\pi_1(\Ca^\DR_\C)$. Therefore, one obtains an isomorphism
$$\pi_1(\Ca^\an,\omega_x)\to\pi_1(\Ca^\DR_\K,\omega_x)\times_\K\C.$$ 
\end{remark}

\subsection{Universal unipotent connection over an elliptic curve $E_{/\K}$}
Using the explicit universal connection $\nabla$ on $\Pbar_\DR$, we provide an explicit construction of the $\K$-connection that has regular singularity at the identity of $E_{/\K}$ on a unipotent vector bundle $\Vbar$ \footnote{One should think of this bundle as Deligne's canonical extension to $E$ of $\cV$ over $E'$.} in $\Ca^\DR_\K$ which, by Cor. \ref{drfgs}, corresponds to a unipotent representation
$$\rho:\exp\LL_\K(\Ss,\T)^\wedge\to\Aut(V).$$
This is achieved by composing the universal connection forms with the representation $\rho$.

Given a unipotent vector bundle $\Vbar$ over $E_{/\K}$ in $\Ca^\DR_\K$. Choose the fiber functor $\omega_\vv$ over the unit tangent vector $\vv$ at the identity of $E$ and denote by $V:=V_\vv$ the fiber over the tangent vector $\vv$ at the identity (see Remark \ref{ff}). This vector bundle $\Vbar$ corresponds to a unipotent representation
$$\rho:\exp\LL_\K(\Ss,\T)^\wedge\to\Aut(V),$$
and equivalently a Lie algebra homomorphism
$$\log\rho:\LL_\K(\Ss,\T)^\wedge\to\End(V).$$
Recall that we have defined 1-forms 
$$\nu_1^\alg\in\Omega^1(E')\otimes\LL_\K(\Ss,\T)^\wedge\quad\text{and}\quad \nu_1^\reg\in\Omega^1(E''\log\{\id\})\otimes\LL_\K(\Ss,\T)^\wedge$$
in Section \ref{algconn} and Section \ref{regconn}, respectively. They are gauge equivalent on $E'\cap E''$ via the transformation
$$g_\reg:E'\cap E''\to\exp\LL_\K(\Ss,\T)^\wedge\subset\Aut\LL_\K(\Ss,\T)^\wedge.$$
Define 1-forms
$$\Omega'_V:=(1\otimes\log\rho)\circ\nu_1^\alg\in\Omega^1(E')\otimes\End(V)$$
and
$$\Omega''_V:=(1\otimes\log\rho)\circ\nu_1^\reg\in\Omega^1(E''\log\{\id\})\otimes\End(V).$$ Over $E'$ and $E''$, we endow trivial bundles
$$
\begin{tikzcd}
V\times E'\arrow[d] &\text{and}& V\times E''\arrow[d]\\
E' && E''
\end{tikzcd}
$$ 
with connections $\nabla=d+\Omega'_V$ and $\nabla=d+\Omega''_V$, respectively. Define
$$g_V:E'\cap E''\to \Aut(V)$$
by $g_V:=\exp(\log\rho\circ\log g_\reg)$, then $\Omega'_V$ and $\Omega''_V$ are gauge equivalent on $E'\cap E''$ via $g_V$. After gluing these two trivial bundles by the gauge transformation $g_V$, we obtain a connection $\nabla$ on $\Vbar$ defined over $\K$ such that
$$\nabla:\Vbar\to\Vbar\otimes\Omega^1_E(\log\{\id\}).$$

\part{Universal Elliptic KZB Connection---Algebraic Formula}

Levin--Racinet \cite[\S 5]{levin-racinet} sketched a proof to show that the bundle $\cP$ over $\E$ and its connection, the universal elliptic KZB connection, are defined over $\Q$. However, just as in the case of a single elliptic curve, their work is incomplete in that their connection formula has irregular singularities along the identity section of $\E$.

We show that after an algebraic change of gauge, the universal elliptic KZB connection has regular singularities along the identity section of $\E$ and the nodal cubic. Since all these data are defined over $\Q$, we have completed the work.

Similar to the previous part, we compute explicitly the connection formula in terms of algebraic coordinates on $\E$. We resolve the issue of irregular singularities by trivializing the bundle on two open subsets $\E'$ and $\E''$ of $\E$, where $\E'$ is obtained from $\E$ by removing the identity section\footnote{It is $\M_{1,2}$ as defined in Section \ref{orbi}.}, and $\E''$ is obtained from $\E$ by removing three sections that correspond to three nontrivial order $2$ elements on each fiber. On both open subsets, the algebraic connection formulas are defined over $\Q$, and the one on $\E''$ has regular singularities along the identity section. Note that the singularities around the nodal cubic are regular on both open subsets, and the gauge transformation on their intersection is compatible with the canonical extension $\Pbar$ of $\cP$ over $\E$ to $\Ebar$. One can think of the universal elliptic KZB connection as an algebraic connection on an algebraic vector bundle $\Pbar_\DR$ over $\Ebar$, which is defined over $\Q$ with regular singularities along boundary divisors. Therefore, we have constructed a $\Q$-de Rham structure $\Pbar_\DR$ on $\Pbar$ over $\Ebar$.

\section{Algebraic Connection Formula over $\E'$}

In Section \ref{KZB}, we defined the universal elliptic KZB connection $\nabla^\an$ on the bundle $\cP$ over $\E'$ analytically. This bundle $\cP$ can be pulled back to a bundle over $\mathcal{M}^\an_{1,1+\vec{1}}$ with connection, which we also denote by $\nabla^\an$. In this section, we will write this connection in terms of algebraic coordinates $x,y,u,v$ on $\mathcal{M}_{1,1+\vec{1}}$ (defined in Section \ref{stack}). The connection is $\G_m$-invariant, and is trivial on each fiber of $\mathcal{M}_{1,1+\vec{1}}\rightarrow\M_{1,2}$, thus descends to a connection on $\M_{1,2}=\E'$.

As in the case of a single elliptic curve, fiber by fiber, we apply the gauge transformation of $g_\alg(\xi,\tau)=\exp(-\frac{E_1}{2\pi i}\T)$ with both $g_\alg$ and $E_1$ having the extra variable $\tau$. After the gauge transformation, the connection 
$$\nabla^\an=d+\omega$$
 transforms into
$$\nabla=d+\omega_{\alg}=d-dg_\alg\cdot g_\alg^{-1}+g_\alg\,\omega\, g_\alg^{-1}.$$
So using Formulas \ref{f2}, \ref{f1}, \ref{cor of f1} and Lemma 9.3 in \cite{kzb} we have
\begin{align*}
\omega_{\alg}
  &=-dg_\alg\cdot g_\alg^{-1}+ g_\alg\cdot \left(2\pi i\,d\tau\otimes\AA\frac{\partial}{\partial \T}\right)+g_\alg\,\psi\, g_\alg^{-1}+g_\alg\,\nu\, g_\alg^{-1} 
\\&=\frac{1}{2\pi i}(\frac{\partial E_1}{\partial\xi}d\xi+\frac{\partial E_1}{\partial\tau}d\tau)\T
	 +2\pi i\,d\tau\otimes\AA\frac{\partial}{\partial \T}+\psi
\\&\quad +\frac{1-\exp(-\frac{E_1}{2\pi i}\T)}{\T}\cdot\AA\otimes{2\pi i} d\tau+\exp(-\frac{E_1\T}{2\pi i})\T F^{\zag}(2\pi i\xi,\T,\tau)\cdot\AA\otimes d\xi	  
\\&\quad +\exp(-\frac{E_1\T}{2\pi i})\big(\frac{1}{\T}+\T\frac{\partial F^{\zag}}{\partial \T}(2\pi i\xi,\T,\tau)\big)\cdot\AA\otimes 2\pi i d\tau
\\&=\frac{1}{2\pi i}(-E_2d\xi+\frac{1}{2\pi i}(E_3-E_1E_2)d\tau)\T
	 +2\pi i\,d\tau\otimes\AA\frac{\partial}{\partial \T}+\psi
\\&\quad +\frac{1-\exp(-\frac{E_1}{2\pi i}\T)}{\T}\cdot\AA\otimes {2\pi i} d\tau+\exp\big(-\sum_{k=2}^\infty\frac{(-\T)^k}{k}P_k(\xi,\tau)\big)\cdot\AA\otimes 2\pi i d\xi	 
\\&\quad + 2\pi i d\tau\otimes\Big[\frac{\exp(-\frac{E_1}{2\pi i}\T)}{\T}
+\exp(-\sum_{k=2}^\infty\frac{(-\T)^k}{k}P_k(\xi,\tau))(\sum_{k=1}^\infty(-\T)^{k-1}P_k(\xi,\tau)-\frac{1}{\T})\Big]\cdot\AA
\\&= \Big(-(2\pi i)^{-2}(E_2-e_2)\T+\exp(-\sum_{k=2}^\infty\frac{(-\T)^k}{k}P_k(\xi,\tau))\cdot\Ss\Big)\otimes 2\pi i(d\xi+\frac{1}{2\pi i}E_1d\tau)
\\&\quad +(2\pi i)^{-3}E_3\T\otimes 2\pi i d\tau+2\pi i\,d\tau\otimes\AA\frac{\partial}{\partial \T}+\psi
\\&\quad +\Big[\exp(-\sum_{k=2}^\infty\frac{(-\T)^k}{k}P_k(\xi,\tau))(\sum_{k=2}^\infty(-\T)^{k-1}P_k(\xi,\tau)-\frac{1}{\T})+\frac{1}{\T}\Big]\cdot\Ss\otimes {2\pi i}d\tau
\end{align*}

Recall the map from Section \ref{orbi}
\begin{align*}
(\C\times\h)-\Lambda_\h &\to \M_{1,1+\vec{1}}^\an=\{(x,y,u,v)\in\C^2\times\C^2: y^2=4x^3-ux-v, (u,v)\neq (0,0) \}
\\ (\xi,\tau) &\mapsto (P_2(\xi,\tau), -2P_3(\xi,\tau),20\Eis_4(\tau),\frac 73\Eis_6(\tau))
\end{align*}
that induces an isomorphism $\SL_2(\Z) \ltimes \Z^2\dbs((\C\times\h)-\Lambda_\h)\cong\G_m\dbs\M_{1,1+\vec{1}}^\an$. By pulling back through this map, we identify some algebraic forms with their analytic counterparts appeared in the formula above in the following
\begin{lemma}\label{algform}
Set $\alpha=2udv-3vdu$, $\Delta=u^3-27v^2$. Then $2\pi i\,d\tau=\frac{3\alpha}{2\Delta}$, and $2\pi i(d\xi+\frac{1}{2\pi i}E_1d\tau)=\frac{dx}{y}-\frac{6x^2-u}{y}\frac{\alpha}{2\Delta}-\frac{1}{6}\frac{d\Delta}{\Delta}\frac{x}{y}$.
\end{lemma}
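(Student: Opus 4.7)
The plan is to verify both identities by pulling back along the map $(\xi,\tau)\mapsto(P_2,-2P_3,20G_4,\tfrac{7}{3}G_6)$ and reducing to classical derivative identities for Eisenstein series; throughout let ${}'=d/d\tau$ and $D=\tfrac{1}{2\pi i}\tfrac{d}{d\tau}$. For the first identity, I would substitute $u=20G_4$, $v=\tfrac{7}{3}G_6$ into $\alpha=(2uv'-3vu')\,d\tau$ and expand using the Ramanujan identities
$$DG_4=-8G_2G_4+\tfrac{7}{10}G_6,\qquad DG_6=-12G_2G_6+\tfrac{400}{7}G_4^2,$$
which follow from the classical identities for $E_2,E_4,E_6$. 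The $G_2$-contributions cancel---forced by the fact that $2u\,dv-3v\,du$ is modular of weight 12---and the remainder equals $\tfrac{4\pi i}{3}(u^3-27v^2)\,d\tau=\tfrac{4\pi i}{3}\Delta\,d\tau$, yielding the first identity.

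For the second identity, the $\xi$-derivative is easy: $\partial_\xi P_2=-2(2\pi i)^{-2}E_3=2\pi i\,y$, so $\tfrac{dx}{y}=2\pi i\,d\xi+\tfrac{\partial_\tau P_2}{y}\,d\tau$. To compute $\partial_\tau P_2$, I would differentiate Formula \ref{f2} in $\xi$ (using $\partial_\xi E_1=-E_2$ and $\partial_\xi E_3=-3E_4$) to obtain $2\pi i\,\partial_\tau E_2=3E_4-E_2^2-2E_1E_3$. Converting to algebraic coordinates via $E_2=(2\pi i)^2(x+2G_2)$, $E_3=-\tfrac{(2\pi i)^3}{2}y$, and $E_4=(2\pi i)^4(x^2-u/12)$ (the last obtained from the recurrence $P_2^2-P_4=2G_4$ of Section \ref{eisell}), and handling $e_2'$ via $DG_2=-2G_2^2+\tfrac{5}{6}G_4$, one arrives at
$$\partial_\tau P_2=2\pi i\bigl(2x^2-4G_2 x-u/3\bigr)+E_1\,y.$$

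Finally, use the first identity in the form $\tfrac{\alpha}{2\Delta}=\tfrac{2\pi i}{3}\,d\tau$ and the Ramanujan identity $D\Delta=E_2\Delta$ (equivalently $\tfrac{d\Delta}{\Delta}=-48\pi i\,G_2\,d\tau$) to rewrite the claimed right-hand side purely in terms of $d\xi$ and $d\tau$. The $d\xi$-coefficients match trivially, and the $d\tau$-coefficients match provided
$$2\pi i\bigl(2x^2-4G_2 x-u/3\bigr)-\tfrac{2\pi i}{3}(6x^2-u)+8\pi i\,G_2 x=0,$$
which is immediate. The main obstacle is bookkeeping---consistently tracking powers of $2\pi i$ and translating between the analytic ($E_k$, $G_k$), intermediate ($P_k$), and algebraic ($x,y,u,v$) normalizations. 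Once these conversions are in place, each step reduces to routine computation.
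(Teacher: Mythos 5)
Your computation is correct, and I checked it end to end: the rescaled Ramanujan identities $DG_2=-2G_2^2+\tfrac{5}{6}G_4$, $DG_4=-8G_2G_4+\tfrac{7}{10}G_6$, $DG_6=-12G_2G_6+\tfrac{400}{7}G_4^2$ are the correct translations of the classical ones to the paper's normalization; the algebra for $2uv'-3vu'=\tfrac{4\pi i}{3}\Delta$ works out; the conversions $E_2=(2\pi i)^2(x+2G_2)$, $E_3=-\tfrac{(2\pi i)^3}{2}y$, $E_4=(2\pi i)^4(x^2-u/12)$ are right (the last via $P_4=P_2^2-2G_4$ and $e_4=\tfrac{(2\pi i)^4}{3}G_4$); differentiating Formula~\ref{f2} in $\xi$ gives $2\pi i\,\partial_\tau E_2=3E_4-E_2^2-2E_1E_3$; and combining with $e_2'$ yields exactly $\partial_\tau P_2=2\pi i(2x^2-4G_2x-u/3)+E_1y$. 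The final cancellation is as you state. The paper's own proof of Lemma~\ref{algform} is just ``Direct computation from [Levin--Racinet, Prop.~5.2.3]'' with no details, so you are not diverging in method — you are supplying the computation the paper outsources. What your version buys is a self-contained derivation entirely from ingredients already present in the paper (Formula~\ref{f2}, the recurrence $P_2^2-P_4=2G_4$, and the Eisenstein normalizations of Section~\ref{eis}), which makes the lemma verifiable without consulting the external reference and also makes visible why the $G_2$-terms must cancel (modularity of weight $12$ for $\alpha$). The cost is the bookkeeping of $2\pi i$-powers, which you flag; that is indeed the only real hazard, and you navigated it correctly.
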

\begin{proof}
Direct computation from \cite[Prop. 5.2.3]{levin-racinet}.
\end{proof}

Recall from Remark \ref{recur} that $P_k(\xi,\tau)=(2\pi i)^{-k}(E_k-e_k)$, $k\ge 2$ can be written as rational polynomials of $x=P_2(\xi,\tau)$, $y=-2P_3(\xi,\tau)$, $u=20\Eis_4(\tau)$ and $v=\frac 73\Eis_6(\tau)$, i.e. for all $k\ge 2$, it can be written as $P_k(x,y,u,v)\in \mathbb{Q}[x,y,u,v]$. Combining this with Lemma \ref{algform}, we only need to show that in terms of basis elements $\T$ and $\Ss$,
$$d+2\pi i\,d\tau\otimes\AA\frac{\partial}{\partial \T}+\psi$$
is algebraic. But with respect to the above framing, $d+2\pi i\,d\tau\otimes\AA\frac{\partial}{\partial \T}$ transforms to (cf. \cite{kzb} Prop 19.6)
\begin{equation}
d+\left(-\frac{1}{12}\frac{d\Delta}{\Delta}\T+\frac{3\alpha}{2\Delta}\Ss\right)\frac{\partial}{\partial \T}+\left(-\frac{u\alpha}{8\Delta}\T+\frac{1}{12}\frac{d\Delta}{\Delta}\Ss\right)\frac{\partial}{\partial \Ss},
\end{equation}
and $\psi$ transforms to 
\begin{equation}
\sum_{m\ge 1}\Big(\frac{1}{(2m)!}\frac{3\alpha}{2\Delta}p_{2m+2}(u,v)\otimes\sum_{\substack{j+k=2m+1\\ j,k>0}}(-1)^j[\ad_{\T}^j(\Ss), \ad_{\T}^k(\Ss)]\frac{\partial}{\partial\Ss}\Big),
\end{equation}
where $\Eis_{2m+2}$ is replaced by $p_{2m+2}(u,v)\in\mathbb{Q}[u,v]$ ($p_{2m}(u,v)$'s are polynomials defined by $G_{2m}(\tau)=p_{2m}(20G_4(\tau),7G_6(\tau)/3)$, where $G_{2m}$ is a normalized Eisenstein series of weight $2m$), and $\ad^j_\T$ denotes the operation that takes the adjoint action of $\T$ repeatly for $j$ times. Note that every derivation $\delta\in\Der\,\LL_{\Q}(\Ss,\T)^\wedge$ can be written uniquely in the form
$$
\delta=\delta(\Ss)\frac{\partial}{\partial\Ss}+\delta(\T)\frac{\partial}{\partial\T},
$$
as it is determined by its values on $\Ss$ and $\T$.

So the algebraic 1-form of the universal elliptic KZB connection is given by
\begin{align*}
\omega_{\alg}=&\left(-\frac{1}{12}\frac{d\Delta}{\Delta}\T+\frac{3\alpha}{2\Delta}\Ss\right)\frac{\partial}{\partial \T}+\left(-\frac{u\alpha}{8\Delta}\T+\frac{1}{12}\frac{d\Delta}{\Delta}\Ss\right)\frac{\partial}{\partial \Ss}
\\&+\Big(-\frac{xdx}{y}+\frac{1}{4}\frac{\alpha}{\Delta}\frac{ux+3v}{y}+\frac{1}{6}\frac{d\Delta}{\Delta}\frac{x^2}{y}\Big)\T
\\&+\exp\big(-\sum_{k=2}^\infty\frac{(-\T)^k}{k}P_k(x,y,u,v)\big)\cdot \Ss\left(\frac{dx}{y}-\frac{6x^2-u}{y}\frac{\alpha}{2\Delta}-\frac{1}{6}\frac{d\Delta}{\Delta}\frac{x}{y}\right)
\\&+\Big[\exp\big(-\sum_{k=2}^\infty\frac{(-\T)^k}{k}P_k(x,y,u,v)\big)(\sum_{k=2}^\infty(-\T)^{k-1}P_k(x,y,u,v)-\frac{1}{\T})+\frac{1}{\T}\Big]\cdot \Ss\frac{3\alpha}{2\Delta}
\\&+\sum_{m\ge 1}\Big(\frac{1}{(2m)!}\frac{3\alpha}{2\Delta}p_{2m+2}(u,v)\otimes\sum_{\substack{j+k=2m+1\\ j,k>0}}(-1)^j[\ad_{\T}^j(\Ss), \ad_{\T}^k(\Ss)]\frac{\partial}{\partial\Ss}\Big)
\\=&\left(-\frac{1}{12}\frac{d\Delta}{\Delta}\T+\frac{3\alpha}{2\Delta}\Ss\right)\frac{\partial}{\partial \T}+\left(-\frac{u\alpha}{8\Delta}\T+\frac{1}{12}\frac{d\Delta}{\Delta}\Ss\right)\frac{\partial}{\partial \Ss}
\\&+\Big(-\frac{xdx}{y}+\frac{1}{4}\frac{\alpha}{\Delta}\frac{ux+3v}{y}+\frac{1}{6}\frac{d\Delta}{\Delta}\frac{x^2}{y}\Big)\T+\left(\frac{dx}{y}-\frac{6x^2-u}{y}\frac{\alpha}{2\Delta}-\frac{1}{6}\frac{d\Delta}{\Delta}\frac{x}{y}\right)\Ss
\\&+\sum_{n\ge 2}\left(\frac{dx}{y}-\frac{6x^2-u}{y}\frac{\alpha}{2\Delta}-\frac{1}{6}\frac{d\Delta}{\Delta}\frac{x}{y}+(n-1)\frac{3\alpha}{2\Delta}\right) q_n(x,y,u,v)\ \T^n\cdot \Ss
\\&+\sum_{m\ge 1}\Big(\frac{1}{(2m)!}\frac{3\alpha}{2\Delta}p_{2m+2}(u,v)\otimes\sum_{\substack{j+k=2m+1\\ j,k>0}}(-1)^j[\T^j\cdot\Ss, \T^k\cdot\Ss]\frac{\partial}{\partial \Ss}\Big)
\end{align*}
where $\Delta=u^3-27v^2$, $\alpha=2udv-3vdu$, $q_n(x,y,u,v)\in\Q[x,y,u,v]$ ($n\ge 2$) are essentially the same polynomials as in Section \ref{algconn} but with two more variables $u,v$ (previously $u,v$ are fixed as the elliptic curve is fixed) and $p_{2m}(u,v)\in\Q[u,v]$ are polynomials we just defined. This 1-form takes value in $\Der\,\LL_\Q(\Ss,\T)^\wedge$. We view $\LL_{\Q}(\Ss,\T)^\wedge$ as a Lie subalgebra of $\Der\,\LL_{\Q}(\Ss,\T)^\wedge$ via the adjoint action, and $\T^n$ acts on $\LL_{\Q}(\Ss,\T)^\wedge$ as $\ad_\T^n$.

\section{Algebraic Connection Formula over $\E''$}\label{QDR}

As in the single elliptic curve case, we apply the gauge transformation $g_\reg=\exp(-\frac{2x^2}{y}\T)$ to the previous formula for the algebraic 1-form, and obtain the algebraic 1-form:
\begin{align*}
\omega_\reg=&-dg_\reg\cdot g_\reg^{-1}+g_\reg\,\omega_\alg\, g_\reg^{-1}
\\=&\left(-\frac{1}{12}\frac{d\Delta}{\Delta}\T+\frac{3\alpha}{2\Delta}\Ss\right)\frac{\partial}{\partial \T}+\left(-\frac{u\alpha}{8\Delta}\T+\frac{1}{12}\frac{d\Delta}{\Delta}\Ss\right)\frac{\partial}{\partial \Ss}
\\&+\left[\left(d\left(\frac{2x^2}{y}\right)-\frac{xdx}{y}\right)+\frac{1}{4}\frac{\alpha}{\Delta}\frac{ux+3v}{y}\right]\T+\left(\frac{dx}{y}+\frac 1y\frac{u\alpha}{2\Delta}-\frac{1}{6}\frac{d\Delta}{\Delta}\frac{x}{y}\right)\Ss
\\&+\sum_{n\ge 1}\left(\frac{dx}{y}+\frac{1}{y}\frac{u\alpha}{2\Delta}-\frac{1}{6}\frac{d\Delta}{\Delta}\frac{x}{y}+(n-1)\frac{3\alpha}{2\Delta}\right)r_n(x,y,u,v)\T^n\cdot \Ss
\\&+\sum_{m\ge 1}\Big(\frac{1}{(2m)!}\frac{3\alpha}{2\Delta}p_{2m+2}(u,v)\otimes\sum_{\substack{j+k=2m+1\\ j,k>0}}(-1)^j[\T^j\cdot\Ss, \T^k\cdot\Ss]\frac{\partial}{\partial \Ss}\Big)
\end{align*}
where $r_n(x,y,u,v)\in\Q(x,y,u,v)$ ($n\ge 2$) are essentially the same rational functions as in Section \ref{regconn} but with two more variables $u,v$. 

Note that the $\mathbb{G}_m$-action of $\lambda$ multiplies $\T$ by $\lambda$, and $\Ss$ by $\lambda^{-1}$. It is easy to check that both connection forms $\omega_\alg$ and $\omega_\reg$ are $\mathbb G_m$-invariant. One can also show that the latter connection form $\omega_\reg$ has regular singularity along the identity section, and along the nodal cubic; the residue of the connection around the identity section is $\text{ad}_{[\T,\Ss]}$, which is pronilpotent. 

Just like the single elliptic curve case, we can use both connections $\omega_\alg$ and $\omega_\reg$ with the gauge tranformation $g_\reg$ between them to construct a vector bundle $\cP_\DR$ over $\E_{/\Q}$. Since both connection forms are defined over $\Q$ and have regular singularities along the nodal cubic, we can extend $\cP_\DR$ to $\Ebar_{/\Q}$ and obtain an algebraic vector bundle $\Pbar_\DR$. 

Let $(\Pbar,\nabla^\an)$ be Deligne's canonical extension to $\Ebar$ of the bundle $\cP$ of (Lie algebras of) unipotent fundamental groups over $\E'$. We have
\begin{theorem}[\bf The $\Q$-de Rham structure $\Pbar_\DR$ on $\Pbar$ over $\Ebar$]
There is an algebraic vector bundle $\Pbar_\DR$ over $\Ebar_{/\Q}$ endowed with connection $\nabla$, and an isomorphism $$(\Pbar_\DR,\nabla)\otimes_{\Q}\C\approx(\Pbar,\nabla^\an).$$
The algebraic bundle $\Pbar_\DR$ and its connection $\nabla$ are both defined over $\Q$. The $\Q$-de Rham structure $(\Pbar_\DR,\nabla)$ on $(\Pbar,\nabla^\an)$ is explicitly given by the connection formulas for $\omega_\alg$ on $\E'$ and $\omega_\reg$ on $\E''$ above. In particular, the connection $\nabla$ has regular singularities along boundary divisors, the identity section and the nodal cubic.
\end{theorem}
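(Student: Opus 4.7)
The strategy mirrors the single-elliptic-curve case (Theorem \ref{adsse}): use the two explicit connection forms $\omega_\alg$ on $\E'$ and $\omega_\reg$ on $\E''$, glue them along the overlap via the algebraic gauge transformation $g_\reg = \exp(-\tfrac{2x^2}{y}\T)$, and then transfer the comparison with the analytic picture through the explicit chain of gauges $g_\alg$, $g_\reg$ that was used to produce $\omega_\alg$ and $\omega_\reg$ from the analytic 1-form $\omega$ of Section \ref{KZB}.

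First I would verify the algebraic bookkeeping. Each of $\omega_\alg$ and $\omega_\reg$ is an explicit rational 1-form in $x,y,u,v$ (using $P_k,q_n,r_n\in\Q[x,y,u,v]$ or $\Q(x,y,u,v)$ and $p_{2m}\in\Q[u,v]$) with values in $\Der\,\LL_\Q(\Ss,\T)^\wedge$, so they define algebraic connections on the trivial bundles $\LL_\Q(\Ss,\T)^\wedge\times\M_{1,1+\vec 1}$ restricted to the appropriate open subsets. Under the weighted $\G_m$-action of Section \ref{stack} together with $\T\mapsto\lambda\T$, $\Ss\mapsto\lambda^{-1}\Ss$, both forms are invariant, so they descend to algebraic connections on $\E'_{/\Q}$ and $\E''_{/\Q}$ respectively. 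On $\E'\cap\E''$ the gauge $g_\reg$ takes values in $\exp\LL_\Q(\Ss,\T)^\wedge$ and, by the very derivation in Section \ref{QDR}, converts $\omega_\alg$ into $\omega_\reg$. Gluing the two trivial bundles by $g_\reg$ produces an algebraic vector bundle $\cP_\DR$ over $\E_{/\Q}$ together with a flat connection $\nabla$ defined over $\Q$.

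The main step is the regular-singularity analysis along the two boundary divisors of $\Ebar$. Along the identity section we work on the $\E''$-chart: Lemma \ref{reg} already treats the fiberwise $d\xi$-part, so what remains is to show that the $d\tau$-direction contributions to $\omega_\reg$ (the terms involving $\alpha/\Delta$, $d\Delta/\Delta$, and the $\psi$-piece, together with the correction $d(2x^2/y)$ coming from $g_\reg$) introduce no new pole at the identity section. This is a direct expansion in a local coordinate at the identity, using $P_1=-2x^2/y$ and the $\xi$-expansions of $P_k$ from the proof of Lemma \ref{reg}; the combined residue equals $\ad_{[\T,\Ss]}$, hence is pronilpotent. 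Along the nodal cubic $\{\Delta=0\}$, the only potentially singular pieces are $d\Delta/\Delta$, $\alpha/\Delta$, $u\alpha/\Delta$, and the $\psi$-term $\tfrac{3\alpha}{2\Delta}p_{2m+2}(u,v)$; each is at worst a simple pole, and one must combine them and identify the residue with the nilpotent limit monodromy of the Tate curve (cf.\ \cite[\S 12, \S 13]{kzb}). Because both $\E'$ and $\E''$ extend to open subsets of $\Ebar$ meeting the nodal cubic, the regularity check on both charts allows us to extend $\cP_\DR$ across $\{\Delta=0\}$ to an algebraic vector bundle $\Pbar_\DR$ on $\Ebar_{/\Q}$, with regular singularities along both boundary divisors.

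For the comparison isomorphism, note that by construction $\omega_\alg=-dg_\alg\cdot g_\alg^{-1}+g_\alg\,\omega\,g_\alg^{-1}$ and $\omega_\reg=-dg_\reg\cdot g_\reg^{-1}+g_\reg\,\omega_\alg\,g_\reg^{-1}$, so after tensoring $\Pbar_\DR$ with $\C$ via the analytic uniformization of Section \ref{orbi}, undoing the composite gauge $g_\reg\circ g_\alg$ recovers the analytic bundle $\cP$ with the universal elliptic KZB connection $\nabla^\an$ on $\E'$. Since the $\E''$-chart has nilpotent residue at the identity section and the $\E'$-chart meets the nodal cubic with the same nilpotent-residue property, the resulting analytic extension to $\Ebar^\an$ is precisely Deligne's canonical extension $(\Pbar,\nabla^\an)$, giving $(\Pbar_\DR,\nabla)\otimes_\Q\C\cong(\Pbar,\nabla^\an)$. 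The main obstacle I anticipate is the nodal-cubic regularity check: one must simultaneously control the $\psi$-term (whose Eisenstein coefficients $p_{2m+2}(u,v)$ proliferate), the exponential in $P_k(x,y,u,v)$, and the bare $\alpha/\Delta$ terms, and verify that the combined residue is a nilpotent derivation of $\LL_\Q(\Ss,\T)^\wedge$. Once this local analysis is complete, the remaining assertions of the theorem follow by the same formal pattern as Part 2.
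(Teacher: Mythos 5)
Your proposal is correct and takes essentially the same route as the paper: construct $\omega_\alg$ and $\omega_\reg$ by the two explicit gauges, check $\G_m$-invariance so the forms descend to $\E'$ and $\E''$, glue along the overlap by $g_\reg$, verify regular singularity and pronilpotent residue $\ad_{[\T,\Ss]}$ along the identity section, cite \cite[\S 12, \S 13]{kzb} for the regular singularity along the nodal cubic, and recover $(\Pbar,\nabla^\an)$ after tensoring with $\C$ by undoing the chain of gauges. The only mild difference is that you propose doing the nodal-cubic residue computation somewhat more by hand, whereas the paper largely delegates that analytic regularity to the cited reference and then transports it through the $\Q$-rational gauge transformations.
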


\end{document}